\definecolor{mygreen}{RGB}{28,172,0}
\newtheorem{theorem}{Theorem}[section]
\newtheorem{lemma}[theorem]{Lemma}
\newtheorem{corollary}[theorem]{Corollary}
\theoremstyle{definition}
\newtheorem{definition}[theorem]{Definition}
\newtheorem{example}[theorem]{Example}
\newtheorem{remark}[theorem]{Remark}
\begin{document}
%-----------------------------------------------------------------
\title{On Laplace transforms with respect to functions and their applications to fractional differential equations}

\author[$\star$,$\dagger$]{Hafiz Muhammad Fahad}
\author[$\star$]{Mujeeb ur Rehman}
\author[$\dagger$]{Arran Fernandez}

\affil[$\star$]{Department of Mathematics, School of Natural Sciences, National University of Sciences and Technology, Islamabad, Pakistan}
\affil[$\dagger$]{Department of Mathematics, Faculty of Arts and Sciences, Eastern Mediterranean University, Famagusta, Northern Cyprus, via Mersin 10, Turkey}

\maketitle

\abstract{An important class of fractional differential and integral operators is given by the theory of fractional calculus with respect to functions, sometimes called $\Psi$-fractional calculus. The operational calculus approach has proved useful for understanding and extending this topic of study. Motivated by fractional differential equations, we present an operational calculus approach for Laplace transforms with respect to functions and their relationship with fractional operators with respect to functions. This approach makes the generalised Laplace transforms much easier to analyse and to apply in practice. We prove several important properties of these generalised Laplace transforms, including an inversion formula, and apply it to solve some fractional differential equations, using the operational calculus approach for efficient solving.}
\section{Introduction}
The birth of fractional calculus finds its roots in the last years of the seventeenth century, when Newton's work along with Leibniz's served as a basis for the inception of classical calculus. Leibniz devised the notation $\frac{\mathrm{d}^n}{\mathrm{d}x^n}f(x)$ to denote the $n$th-order derivative of the function $ f $. When he communicated this to de l'H\^opital, the latter asked about the meaning of the said notation if $ n=\frac{1}{2} $. This communication is unanimously considered as the foundation of fractional calculus. At present, this field has become a matter of deep interest for many researchers.

The most classical operators of fractional calculus are those now called the Riemann--Liouville fractional integral and derivative \cite{Samko,Kilbas}. Caputo made a significant contribution by introducing a different definition of fractional derivatives which is more suitable for certain physical conditions \cite{Caputo,Diethelm}. Moreover, several other families of fractional operators have been introduced and studied until now, out of which Erd\'elyi--Kober, Hadamard, Gr\"unwald--Letnikov, Hilfer, Marchaud, and Prabhakar are just a few to mention \cite{Samko,Kilbas,Hilfer,Prabhakar}.

Due to the large number of definitions of fractional operators, it is important to establish some generalised fractional operators of which the classical ones are particular cases. This enables mathematicians to work on a general level and prove results which can then be used by applied scientists in particular cases \cite{baleanu-fernandez}. Some general classes of fractional operators were proposed in for example \cite{Arran,Zhao,Teodoro}.

One of these classes of generalised fractional operators, which we shall focus on in the current work, is given by applying fractional operators of a function with respect to another function. This concept was introduced in a special case by Erd\'elyi \cite{erdelyi} and in general by Osler \cite{Osler}, and the theory is discussed in some of the standard textbooks such as \cite{Kilbas,Samko}. After the introduction of Riemann--Liouville fractional integrals and derivatives with respect to functions, it is natural to extend the concept to Caputo fractional derivatives with respect to functions \cite{Almeida} and Hilfer fractional derivatives with respect to functions \cite{Sousa}. In the wake of these recent papers, the term ``$\Psi$-fractional calculus'' has gained popularity as a terminology for fractional calculus with respect to a function $\Psi(x)$. Special cases of fractional calculus with respect to functions include Hadamard fractional calculus \cite{hadamard} and the so-called Katugampola fractional calculus \cite{Katugampola}, which in reality is the same as the operator proposed by Erd\`elyi fifty years earlier.

The motivation for writing this paper is largely due to the extensive use of fractional differential equations (FDEs) in physics, economics, engineering and other branches of sciences \cite{Diethelm,Kilbas,Herzallah,Hilfer,Podlubny,Tarasov}. Many different methods exist in the literature for solving FDEs analytically or numerically, and developing some suitable methods to find analytic solutions for some classes of FDEs is one of the most challenging tasks in the field of fractional calculus. In the past few years, some researchers have been interested in introducing fractional extensions of the classical integral transforms, such as Laplace and Fourier transforms, among others \cite{FJAJ,fernandez-baleanu-fokas,Kerr,Zayed,Ozaktas,Namias}. In particular, the Laplace transform and its analogues and extensions \cite{FJAJ,Jarad,Silva} has been considered an effective tool for obtaining analytic solutions to some classes of FDEs. We shall focus specifically on the generalised Laplace transform due to \cite{FJAJ}, which can be called a $\Psi$-Laplace transform or Laplace transform with respect to functions. This operator combines neatly with the fractional integrals and derivatives with respect to functions, and we shall apply it to solve differential equations in this setting.

An important methodology for simplifying fractional calculus with respect to functions, which is mentioned in classical textbooks such as \cite{Samko} but is rarely applied in practice, is the operational calculus approach. This is a way of writing fractional operators with respect to functions as a simple conjugation of the original Riemann--Liouville fractional operators, which is extremely useful in extending knowledge from the Riemann--Liouville setting to the more general setting. Here, we shall apply this methodology to the study of generalised Laplace transforms, which makes many facts about them much easier to prove. It will also be helpful in solving various $\Psi$-fractional differential equations.

The article is organised as follows. Section 2 contains preliminary definitions from classical and fractional calculus. In Section 3, we consider the generalised Laplace transform with respect to functions, including from the viewpoint of operational calculus, and prove several important properties including an inversion formula and generalised Laplace transforms of several fractional operators with respect to functions. Sections 4 and 5 are devoted to fractional differential equations, firstly a regularity result to show applicability of the generalised Laplace transform, and then explicitly solving some Cauchy initial value problems. Finally Section 6 makes concluding statements about this manuscript and future directions of research.
\section{Preliminaries}

Prior to giving the generalised Laplace transform, we first recall some definitions from the classical and fractional calculus.

\allowdisplaybreaks
\subsection{Fractional operators with respect to functions}
%In the following definition, we define the so-called $ \Psi $-RL fractional operators.

%In view of the fact that there is a large class of fractional operators available in literature which makes choosing the appropriate approach a difficult task while dealing with a given problem. So it is important to introduce the generalisations of classical fractional operators to overcome the issue of choosing a suitable operator. In this subsection we invoke some generalised definitions of fractional integrals and derivatives.
\allowdisplaybreaks
\begin{definition}[\cite{Kilbas,Samko,Osler}] \label{defgeneralizedRL}
	Let $ \mu $ be a real number such that $ \mu>0$, $ -\infty \leq a <b \leq \infty  $, $ m= \lfloor \mu \rfloor +1 $, $ f $ be an integrable function defined on $ [a,b] $ and $ \Psi \in C^{1} ([a,b]) $ be an increasing function such that $ \Psi^{\prime}(t) \not = 0 $ for all $ t \in [a,b] $. Then, the $\Psi$-RL fractional integral of $f$ (or Riemann--Liouville fractional integral of $f$ with respect to $\Psi$) of order $\mu$ is defined as
	\begin{equation} \label{psiintegraldef}
	\prescript{}{a}{\mathcal{I}}^{\mu}_{\Psi(t)}f(t) = \frac{1}{\Gamma(\mu)} \int_{a}^{t} \Big( \Psi(t)-\Psi(s) \Big)^{\mu-1} \Psi^\prime (s) f(s)\,\mathrm{d}s,
	\end{equation}
	and the $\Psi$-RL fractional derivative of $f$ (or Riemann--Liouville fractional derivative of $f$ with respect to $\Psi$) of order $\mu$ is defined as
	\begin{equation} \label{psidervativeded}
	\prescript{R}{a}{\mathcal{D}}^{\mu}_{\Psi(t)}f(t) = \left( \frac{1}{\Psi^\prime (t)}\cdot\frac{\mathrm{d}}{\mathrm{d}t} \right)^{m} 	\prescript{}{a}{\mathcal{I}}^{m-\mu}_{\Psi(t)}f(t).
	\end{equation}
\end{definition}

It is to be noted that the case $\Psi(t)=t$ gives $ \prescript{}{a}{\mathcal{I}}^{\mu}_{\Psi(t)}f(t)=\prescript{}{a}{\mathcal{I}}^{\mu}_{t}f(t)$ which is the standard Riemann--Liouville integral. Moreover, for $ \Psi(t)=\log(t) $ the operators defined in \eqref{psiintegraldef}--\eqref{psidervativeded} become the Hadamard fractional integral and derivative respectively.

%Inspired by Caputo's concept \cite{Caputo} of fractional derivative, Almeida \cite{Almeida} presents the following Caputo version of \eqref{psidervativeded} and studies some important properties of fractional calculus.
\begin{definition}[\cite{Almeida}] \label{defgeneralizedC}
	Let $ \mu $ be a real number such that $ \mu>0$, $ -\infty \leq a <b \leq \infty  $, $ m= \lfloor \mu \rfloor +1 $, and $f,\Psi \in C^{m} ([a,b]) $ be functions such that $ \Psi $ is increasing and $ \Psi^{\prime}(t) \not = 0 $ for all $ t \in [a,b] $. Then, the $\Psi$-C fractional derivative of $f$ (or Caputo fractional derivative of $f$ with respect to $\Psi$) of order $\mu$ is defined as
	\begin{equation} \label{psicaputodef}
	\prescript{C}{a}{\mathcal{D}}^{\mu}_{\Psi(t)}f(t) = \prescript{}{a}{\mathcal{I}}^{m-\mu}_{\Psi(t)}\left( \frac{1}{\Psi^\prime (t)} \cdot\frac{\mathrm{d}}{\mathrm{d}t} \right)^{m}f(t).
	\end{equation}
\end{definition}

Again, taking $ \Psi(t)=\log(t) $ or $ \Psi(t)=t $, we get the Caputo-type Hadamard fractional derivative \cite{RAbdeljawadc} and Caputo fractional derivative \cite{Samko} respectively.

The Caputo-type definition \eqref{psicaputodef} is natural to introduce after the Riemann--Liouville-type definitions \eqref{psiintegraldef}--\eqref{psidervativeded} are established. However, it took until Almeida's 2017 paper \cite{Almeida} for this definition to be formalised and analysed. It was that same paper, as far as we can determine, which started the trend of using ``$\Psi$-fractional calculus'' to mean fractional calculus with respect to functions. The next year in 2018, Sousa and Oliveira \cite{Sousa} extended the concept further by combining it with the Hilfer fractional derivatives \cite{Hilfer} to produce the so-called $\Psi$-Hilfer fractional derivative, defined as follows.

%Motivated by the definitions of $ \Psi $-RL and Hilfer fractional derivatives, Sousa and  Oliveira \cite{Sousa} introduce the $ \Psi $-Hilfer fractional derivative which we recall in the following definition.
\begin{definition} \cite{Sousa}
	Let $ \mu $ be a real number such that $ \mu>0$, $ -\infty \leq a <b \leq \infty  $, $0\leq\nu\leq1$, $ m= \lfloor \mu \rfloor +1 $, and $f,\Psi\in C^{m} ([a,b],\mathbb{R})$ be functions such that $ \Psi $ is increasing and $ \Psi^{\prime}(t) \not = 0 $ for all $ t \in [a,b] $. Then the $\Psi$-Hilfer fractional derivative of $f$ of order $\mu$ and type $\nu$ is given by
	\begin{equation}
	\prescript{}{a}{\mathcal{D}}^{\mu,\nu}_{\Psi(t)}f(t) = \prescript{}{a}{\mathcal{I}}^{\nu(m-\mu)}_{\Psi(t)}\left( \frac{1}{\Psi^\prime (t)}\cdot\frac{\mathrm{d}}{\mathrm{d}t} \right)^{m}  \prescript{}{a}{\mathcal{I}}^{(1-\nu)(m-\mu)}_{\Psi(t)}f(t).
	\end{equation}
\end{definition}

Other fractional operators with respect to functions have also been defined in \cite{Arran,Fahad2,Fahad,Oumarou}, but here we shall focus on the three fundamental ones defined above.

These generalised fractional operators can be written as the conjugation of the standard fractional operators with the operation of composition with  $\Psi$ or $\Psi^{-1}$:
\begin{align}
\prescript{}{a}{\mathcal{I}}^{\mu}_{\Psi(t)}=\mathcal{Q}_\Psi\circ\prescript{}{\Psi(a)}{\mathcal{I}}^{\mu}_{t}\circ \mathcal{Q}_\Psi^{-1},\quad \prescript{R}{a}{\mathcal{D}}^{\mu}_{\Psi(t)}&=\mathcal{Q}_\Psi\circ\prescript{R}{\Psi(a)}{\mathcal{D}}^{\mu}_{t}\circ \mathcal{Q}_\Psi^{-1},\quad\prescript{C}{a}{\mathcal{D}}^{\mu}_{\Psi(t)}=\mathcal{Q}_\Psi\circ   \prescript{C}{\Psi(a)}{\mathcal{D}}^{\mu}_{t}\circ \mathcal{Q}_\Psi^{-1},\label{FwrtF:conjug}\\ \prescript{}{a}{\mathcal{D}}^{\mu,\nu}_{\Psi(t)}&=\mathcal{Q}_\Psi\circ\prescript{}{\Psi(a)}{\mathcal{D}}^{\mu,\nu}_{t}\circ \mathcal{Q}_\Psi^{-1}, \label{FwrtF:Hilfer}
\end{align}
where the functional operator $\mathcal{Q}_\Psi$ is defined by
\begin{equation}
\label{Qdef}
(\mathcal{Q}_\Psi f)(x)=f(\Psi(x)).
\end{equation}
These conjugation expressions all interconnect with each other since fractional derivatives are generally defined using composition. They are mentioned in \cite{Samko} and are very useful in proving various properties of fractional calculus with respect to functions, since many results in this generalised setting can now be proved directly from the corresponding properties in the classical Riemann--Liouville or Caputo setting.

\subsection{Laplace transforms and some special functions}

\begin{definition} 
	Assume that the function $ f $ is defined for $ t \geq 0 $. Then the Laplace transform of $f$, denoted by $\mathcal{L}_{} \left\{ f \right\}$, is defined by the improper integral
	\begin{equation} \label{classicallaplacedef}
	\mathcal{L}_{} \left\{ f(t) \right\}=F(s)=\int_{0}^{\infty} e^{-s t} f(t)\,\mathrm{d}t
	\end{equation}
	provided that the integral in \eqref{classicallaplacedef} exists, i.e., that the integral is convergent. 
	
	The inverse Laplace transform is defined by
	\begin{equation} \label{inverselaplace}
	\mathcal{L}^{-1} \left\{F(s) \right\} =\frac{1}{{2 \pi i}} \int_{c-i\infty}^{c+i\infty} e^{st }F(s)\,\mathrm{d}s.
	\end{equation}
\end{definition}

\begin{lemma}[\cite{Podlubny,Z. Tomovski}] We recall the Laplace transforms of the basic fractional operators.
	\begin{itemize}
		\item[(a)]  The Laplace transform of the Riemann–Liouville fractional integral of order $ \mu $ is given by: 
		\begin{equation} \label{LTRLI}
		\mathcal{L}_{} \left\{ (\prescript{}{0}{\mathcal{I}}^{\mu}_{t}f)(t) \right\}= s^{-\mu}	\mathcal{L}_{} \left\{ f(t) \right\}.
		\end{equation}
		\item[(b)] The Laplace transform of the Riemann–Liouville fractional derivative of order $ \mu $ is given by: 
		\begin{equation}\label{LTRLD}
		\mathcal{L}_{} \left\{  \prescript{R}{0}{\mathcal{D}}^{\mu}_{t}f(t) \right\} =s^{\mu} \mathcal{L}_{} \left\{ f(t) \right\} - \sum_{i=0}^{m-1} s^{m-i-1}(\prescript{}{0}{\mathcal{I}}^{m-i-\mu}_{t}f)(0).
		\end{equation}
		\item[(c)] The Laplace transform of the Caputo fractional derivative of order $ \mu $ is given by:  
		\begin{equation} \label{LTCD}
		\mathcal{L}_{} \left\{  \prescript{C}{0}{\mathcal{D}}^{\mu}_{t}f(t) \right\} =s^{\mu} \mathcal{L}_{} \left\{ f(t) \right\} - \sum_{i=0}^{m-1} s^{\mu-i-1}(\prescript{}{0}{\mathcal{D}}^{i}_{t}f)(0).
		\end{equation}
		\item[(d)] The Laplace transform of the Hilfer fractional derivative of order $ \mu $ and type $ \nu $ is given by:  
		\begin{equation} \label{LTHD} 
		\mathcal{L}_{} \left\{  \prescript{}{0}{\mathcal{D}}^{\mu,\nu}_{t}f(t) \right\} =s^{\mu} \mathcal{L}_{} \left\{ f(t) \right\} - \sum_{i=0}^{m-1} s^{m(1-\nu)+\mu \nu-i-1}(\prescript{}{0}{\mathcal{I}}^{(1-\nu)(m-\mu)-i}_{t}f)(0).
		\end{equation}
	\end{itemize}
\end{lemma}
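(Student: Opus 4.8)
The plan is to reduce all four parts to part (a) together with the classical Laplace transform of an integer-order derivative, namely $\mathcal{L}\{f^{(m)}(t)\}=s^{m}\mathcal{L}\{f(t)\}-\sum_{k=0}^{m-1}s^{m-k-1}f^{(k)}(0)$, which I take as a known elementary fact. First I would prove (a) by recognising the Riemann--Liouville integral as a convolution: writing $g_\mu(t)=t^{\mu-1}/\Gamma(\mu)$, definition \eqref{psiintegraldef} with $\Psi(t)=t$ and $a=0$ gives $\prescript{}{0}{\mathcal{I}}^{\mu}_{t}f=g_\mu * f$. Since $\mathcal{L}\{g_\mu(t)\}=s^{-\mu}$ by a direct Gamma-integral computation, and the Laplace transform converts convolution into a product, \eqref{LTRLI} follows immediately. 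Every subsequent part will hang on this convolution identity.

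For (b) I would use definition \eqref{psidervativeded} in the case $\Psi(t)=t$, that is $\prescript{R}{0}{\mathcal{D}}^{\mu}_{t}f=\frac{\mathrm{d}^{m}}{\mathrm{d}t^{m}}\prescript{}{0}{\mathcal{I}}^{m-\mu}_{t}f$. Applying the integer-derivative formula to $g:=\prescript{}{0}{\mathcal{I}}^{m-\mu}_{t}f$ and then substituting $\mathcal{L}\{g\}=s^{\mu-m}\mathcal{L}\{f\}$ from (a) produces a leading term $s^{m}\cdot s^{\mu-m}\mathcal{L}\{f\}=s^{\mu}\mathcal{L}\{f\}$ together with the boundary sum $\sum_{k=0}^{m-1}s^{m-k-1}g^{(k)}(0)$. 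The one identity needing care here is that differentiating a Riemann--Liouville integral lowers its order, $g^{(k)}=\prescript{}{0}{\mathcal{I}}^{m-\mu-k}_{t}f$, whence $g^{(k)}(0)=(\prescript{}{0}{\mathcal{I}}^{m-k-\mu}_{t}f)(0)$; this matches the summand in \eqref{LTRLD}. Part (c) is dual: from \eqref{psicaputodef} we have $\prescript{C}{0}{\mathcal{D}}^{\mu}_{t}f=\prescript{}{0}{\mathcal{I}}^{m-\mu}_{t}f^{(m)}$, so applying (a) to $f^{(m)}$ gives $s^{\mu-m}\mathcal{L}\{f^{(m)}\}$, and expanding $\mathcal{L}\{f^{(m)}\}$ by the integer formula, while writing $f^{(k)}(0)=(\prescript{}{0}{\mathcal{D}}^{k}_{t}f)(0)$, yields \eqref{LTCD} once the exponents collapse to $s^{\mu-k-1}$.

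Part (d) is the one to handle most carefully, since the Hilfer derivative nests an integral, a derivative and a further integral. Writing $h:=\prescript{}{0}{\mathcal{I}}^{(1-\nu)(m-\mu)}_{t}f$, the definition reads $\prescript{}{0}{\mathcal{D}}^{\mu,\nu}_{t}f=\prescript{}{0}{\mathcal{I}}^{\nu(m-\mu)}_{t}\frac{\mathrm{d}^{m}}{\mathrm{d}t^{m}}h$. I would apply (a) to the outer integral, the integer-derivative formula to $\frac{\mathrm{d}^{m}}{\mathrm{d}t^{m}}h$, and (a) again through $\mathcal{L}\{h\}=s^{-(1-\nu)(m-\mu)}\mathcal{L}\{f\}$; the leading exponent then simplifies as $-\nu(m-\mu)+m-(1-\nu)(m-\mu)=\mu$, recovering $s^{\mu}\mathcal{L}\{f\}$. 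For the boundary terms the exponent bookkeeping is the crux: $s^{-\nu(m-\mu)}\cdot s^{m-k-1}=s^{m(1-\nu)+\mu\nu-k-1}$ since $m-\nu(m-\mu)=m(1-\nu)+\mu\nu$, while $h^{(k)}(0)=(\prescript{}{0}{\mathcal{I}}^{(1-\nu)(m-\mu)-k}_{t}f)(0)$ by the same order-lowering identity used in (b); together these reproduce exactly \eqref{LTHD}. Throughout, the expected main obstacle is not any single computation but keeping the fractional exponents and the reduced-order boundary evaluations consistent across the three nested constructions; I would also record the standing assumptions, namely that $f$ is of exponential order and sufficiently smooth, under which all the transforms converge and the integer-derivative formula is valid.
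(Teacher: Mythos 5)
Your derivation is correct, but note that the paper does not prove this lemma at all: it is stated purely as a recollection of known results, with proofs deferred to the cited references (Podlubny for parts (a)--(c), Tomovski--Hilfer--Srivastava for part (d)). What you have written is essentially the standard textbook argument underlying those citations: part (a) via the convolution theorem with the kernel $t^{\mu-1}/\Gamma(\mu)$, and parts (b)--(d) by decomposing each operator into integer-order derivatives and Riemann--Liouville integrals, then invoking (a) together with the classical formula for $\mathcal{L}\{f^{(m)}\}$. Your exponent bookkeeping checks out: $s^{m}\cdot s^{\mu-m}=s^{\mu}$ in (b), the collapse to $s^{\mu-k-1}$ in (c), and $-\nu(m-\mu)+m-(1-\nu)(m-\mu)=\mu$ together with $m-\nu(m-\mu)=m(1-\nu)+\mu\nu$ in (d). The only point worth flagging is your ``order-lowering identity'' $g^{(k)}=\prescript{}{0}{\mathcal{I}}^{m-\mu-k}_{t}f$: for $k\geq1$ the order $m-\mu-k$ is non-positive (since $m-\mu\leq1$), so this symbol must be read as the Riemann--Liouville \emph{derivative} of order $k-(m-\mu)$ --- a convention that the statement of the lemma itself implicitly uses in the summands of \eqref{LTRLD} and \eqref{LTHD}. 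With that reading your identity is exactly the definition \eqref{psidervativeded} specialised to $\Psi(t)=t$ (one checks $k=\lfloor k-(m-\mu)\rfloor+1$), so there is no gap; it would simply be worth making the convention explicit, along with the standing hypotheses (exponential order of $f$ and of the nested integrals, enough smoothness near $0$) under which the convolution theorem and the integer-order derivative formula you rely on are valid.
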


%\begin{definition}
%	Assume that $ f $ is a piecewise smooth, continuous and absolutely
%	integrable function. Then the Fourier transform of a function $ f $, denoted by $\mathcal{F}_{} \left\{ f \right\}$ or $ \tilde{f}(k) $, is defined by
%	\begin{equation} \label{fourier}
%	\mathcal{F}_{} \left\{ f(t) \right\} =\int_{-\infty}^{\infty} e^{-ikt }  f(t) dt
%	\end{equation}
%	where $ k $ is the Fourier transform variable.
%	
%	The inverse Fourier transform of $ \mathcal{F}_{} \left\{ f(t) \right\} $ is defined by
%	\begin{equation} \label{inversefourier}
%	\mathcal{F}^{-1}_{} \left\{  \mathcal{F}_{} \left\{ f(t) \right\}  \right\} =\frac{1}{{2 \pi}} \int_{-\infty}^{\infty} e^{ikt }  \tilde{f}(k) dk.
%	\end{equation}
%\end{definition}
%\subsection{Some special functions}

There are several special functions which are considered to be helpful for finding the solutions of FDEs. In the following Definitions, we present a few of them.
\begin{definition}
	The Wright function is defined as
	\begin{equation}
	\label{Wrightdefn}
	W(z, \mu, \nu) : = \sum_{j=0}^{\infty} \frac{z^{j}}{j! \Gamma(\mu j+ \nu)}, \qquad \mu,\nu,z\in\mathbb{C}, \mathrm{Re}(\mu) > -1,
	\end{equation}
	which is an entire function of $z$ on the whole complex plane. It appeared for the first time in \cite{Wright1,Wright2} in connection with E. M. Wright's investigations in the asymptotic theory of partitions.
\end{definition}

\begin{definition}
	In \cite{Mittag}, G\"osta Mittag-Leffler introduced the well-known Mittag-Leffler function	$E_{\mu} (z)$, given by 
	\begin{equation} \label{mlfun}
	E_{\mu} (z)= \sum_{j=0}^{\infty} \frac{z^{j}}{\Gamma(\mu j+ 1)}, \ \ \mu \in \mathbb{C}, \ \  \text{Re}(\mu)  > 0.
	\end{equation}
	Later on, a natural generalisation of $E_{\mu} (z)$ was discussed by Wiman in \cite{Wiman}. He introduced the function $E_{\mu,\nu} (z)$ as
	\begin{equation} \label{wiman}
	E_{\mu,\nu} (z)= \sum_{j=0}^{\infty} \frac{z^{j}}{\Gamma(\mu j+ \nu)}, \ \ \mu, \nu \in \mathbb{C}, \ \  \text{Re}(\mu) > 0.
	\end{equation}
	If we consider $ \nu=1 $ in \eqref{wiman}, we obtain the Mittag-Leffler function \eqref{mlfun}. In \cite{Prabhakar}, Prabhakar presented a more generalised version of \eqref{mlfun}-\eqref{wiman}, called the Prabhakar function or three-parameter Mittag-Leffler function, which is defined by the series representation
	\begin{equation} \label{Prabhakar}
	E_{\mu,\nu}^{\gamma} (z) : = \frac{1}{\Gamma(\gamma)}\sum_{j=0}^{\infty} \frac{\Gamma(\gamma+j)z^{j}}{j! \Gamma(\mu j+ \nu)}, \ \ \mu, \nu, \gamma \in \mathbb{C}, \ \  \text{Re}(\mu) > 0.
	\end{equation}
	This is an entire function of $z$ of order $1/\text{Re}(\mu)$. This function plays a necessary role in the explanation of the anomalous dielectric properties in heterogeneous systems, and some important properties of this function can be seen in \cite{Kilbas1,Nigmatullin,Saxena,Garra}.
\end{definition}

The Mittag-Leffler and related functions defined above have some interesting connections with the Laplace transform, as summarised in the following Lemma which will be used later.

\begin{lemma}[\cite{gorenflo-kilbas-mainardi-rogosin}] \label{Lem:LapML}
	The standard Mittag-Leffler function has a Laplace transform given by
	\[
	\mathcal{L}\left\{E_{\mu}(\lambda t^\mu)\right\}=\frac{s^{\mu-1}}{s^{\mu}-\lambda},
	\]
	while the three-parameter version has a Laplace transform given by
	\[
	\mathcal{L}\left\{t^{\nu-1}E_{\mu,\nu}^{\gamma}(\lambda t^\mu)\right\}=\frac{s^{\mu\gamma-\nu}}{\left(s^{\mu}-\lambda\right)^\gamma}.
	\]
\end{lemma}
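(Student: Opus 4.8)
The plan is to prove both identities by transforming the defining power series \eqref{mlfun} and \eqref{Prabhakar} term by term, using throughout the elementary transform $\mathcal{L}\{t^{\alpha-1}\}=\Gamma(\alpha)/s^{\alpha}$ (valid for $\mathrm{Re}(\alpha)>0$). It is worth observing at the outset that the first identity is precisely the special case $\gamma=\nu=1$ of the second, since $E_{\mu,1}^{1}=E_{\mu}$; thus it would in principle suffice to establish the three-parameter formula and specialise. Nonetheless I would present the one-parameter case first as a warm-up, because there the resulting series collapses to a transparent geometric sum, which motivates the binomial summation needed in the general case.

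For the standard Mittag-Leffler function I would insert $E_{\mu}(\lambda t^{\mu})=\sum_{j\geq0}\lambda^{j}t^{\mu j}/\Gamma(\mu j+1)$ and transform each monomial. The key algebraic feature is that the factor $\Gamma(\mu j+1)$ produced by $\mathcal{L}\{t^{\mu j}\}$ cancels exactly against the denominator of the series, leaving $s^{-1}\sum_{j\geq0}\bigl(\lambda s^{-\mu}\bigr)^{j}$. Summing this geometric series gives $s^{-1}\bigl(1-\lambda s^{-\mu}\bigr)^{-1}=s^{\mu-1}/(s^{\mu}-\lambda)$, which is the asserted expression.

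For the three-parameter case I would proceed identically, inserting \eqref{Prabhakar} into the transform and using $\mathcal{L}\{t^{\mu j+\nu-1}\}=\Gamma(\mu j+\nu)/s^{\mu j+\nu}$ (which requires the mild hypothesis $\mathrm{Re}(\nu)>0$ for the $j=0$ term). Once more the $\Gamma(\mu j+\nu)$ factors cancel against the series denominators, reducing the transform to $s^{-\nu}\sum_{j\geq0}\frac{\Gamma(\gamma+j)}{j!\,\Gamma(\gamma)}\bigl(\lambda s^{-\mu}\bigr)^{j}$. The remaining sum is exactly the generalised binomial series for $\bigl(1-\lambda s^{-\mu}\bigr)^{-\gamma}$, so the transform equals $s^{-\nu}\bigl(1-\lambda s^{-\mu}\bigr)^{-\gamma}=s^{\mu\gamma-\nu}/(s^{\mu}-\lambda)^{\gamma}$, as required.

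The only genuinely delicate point, and hence the main obstacle, is justifying the term-by-term integration, that is, the interchange of the infinite sum with the Laplace integral. I would secure this by working first in the region where the underlying geometric/binomial series converges absolutely, namely $\abs{s}^{\mathrm{Re}(\mu)}>\abs{\lambda}$ (so that $\abs{\lambda s^{-\mu}}<1$). Replacing each term by its absolute value and applying Tonelli's theorem to the resulting non-negative sum-integral, one finds it equals $s^{-\nu}\sum_{j\geq0}\frac{\Gamma(\gamma+j)}{j!\,\Gamma(\gamma)}\bigl(\abs{\lambda}s^{-\mu}\bigr)^{j}$ for real $s$, which is finite exactly in that region; this finiteness licenses Fubini's theorem and hence the interchange. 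The right-hand expressions are analytic in $s$ away from the branch locus $s^{\mu}=\lambda$, so the identities then extend by analytic continuation to the full domain on which both sides are defined.
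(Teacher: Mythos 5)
The paper itself contains no proof of this lemma: it is quoted directly, with a citation, from the monograph of Gorenflo, Kilbas, Mainardi and Rogosin, so there is no internal argument of the paper to compare yours against. Your proof is correct and is the standard one from the literature: term-by-term Laplace transformation of the defining series \eqref{mlfun} and \eqref{Prabhakar}, exact cancellation of the $\Gamma(\mu j+1)$ (resp.\ $\Gamma(\mu j+\nu)$) factors, and summation of the resulting geometric (resp.\ generalised binomial) series, with Tonelli/Fubini licensing the interchange of sum and integral and analytic continuation extending the identity beyond the region $\left\vert \lambda s^{-\mu}\right\vert<1$ where the series argument applies. Your observation that the one-parameter case is the specialisation $\gamma=\nu=1$ of the Prabhakar case is also correct, since $E_{\mu,1}^{1}=E_{\mu}$. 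Two small points worth polishing, neither of which affects validity: for complex $\gamma$ the Tonelli step should \emph{bound} $\left\vert\Gamma(\gamma+j)/\Gamma(\gamma)\right\vert$ (e.g.\ by a polynomial in $j$) rather than replace it exactly by the same ratio with $\vert\lambda\vert$ inserted; and the convergence region is cleanest if stated for real $s>0$ with $s^{\mathrm{Re}(\mu)}>\vert\lambda\vert$, since for complex $s$ and $\mu$ the modulus $\left\vert s^{-\mu}\right\vert$ also involves $\arg s$ --- but as you already carry out the estimate for real $s$ and then continue analytically, this is only a matter of phrasing.
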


%%%%%%%%%%%%%%%%%%%%%%%%%%%%%%%%%%%%%%%%%%%%%%%%%%%%%%%%%%%%%%%%%%%%%%%%%%%%%%%%%%%%%

\section{The generalised Laplace transform}

In this section, we discuss a generalised integral transform introduced by Jarad and Abdeljawad \cite{FJAJ} which can be used to solve linear FDEs involving $\Psi$-RL, $\Psi$-C and $\Psi$-Hilfer fractional derivatives. This new integral transform is the natural generalisation of the classical Laplace transform for the setting of fractional operators with respect to functions. Here we develop the operational calculus approach which makes the generalised Laplace transform, and its relationship with generalised fractional operators, much easier to understand. Using this approach together with classical results on the original Laplace transform, we prove several important properties of the generalised Laplace transform. The derivation of an inversion formula also forms part of this section.

\begin{definition}[\cite{FJAJ}] \label{psilaplacedef}
	Let $ f:[0,\infty) \to \mathbb{R} $ be a real-valued function and $ \Psi $ be a non-negative increasing function such that $\Psi(0)=0$. Then the Laplace transform of $f$ with respect to $\Psi$ is defined by
	\begin{equation} \label{def}
	\mathcal{L}_{\Psi} \left\{ f(t) \right\}=F(s)=\int_{0}^{\infty} e^{-s \Psi(t)} \Psi^\prime (t) f(t) \,\mathrm{d}t
	\end{equation}
	for all $s\in\mathbb{C}$ such that this integral converges. Here $\mathcal{L}_{\Psi}$ denotes the Laplace transform with respect to $\Psi$, which we call a generalised Laplace transform.
\end{definition}

\begin{definition}[\cite{FJAJ}] \label{Def:psiexp}
	An $ n $-dimensional function $\boldsymbol{f}:[0,\infty)\to\mathbb{R}^n$ is said to be of $ \Psi $-exponential order $c>0$ if there exist positive constants $ M $ and $T$ such that for all $ t>T $,
	\[
	\left\|\boldsymbol{f}\right\|_{\infty} {=} \max_{1 \leq i \leq n} \left\|f_i \right\|_{\infty}\leq M e^{c \Psi(t)},
	\]
	i.e. if
	\[
	\boldsymbol{f}(t)= \mathcal{O}( e^{c \Psi(t)})  \text{ \ \ \ \ as \ \   }  t \to \infty.
	\]		
\end{definition}

Now we state some sufficient conditions for the existence of the generalised Laplace transform of a function. This motivates the inclusion of Definition \ref{Def:psiexp} here.

\begin{theorem} \label{existence}
	If $ f:[0,\infty) \to \mathbb{R} $ is a piecewise continuous function and is of $ \Psi $-exponential order $c>0$, where $\Psi$ is a non-negative increasing function with $\Psi(0)=0$, then the generalised Laplace transform of $f$ exists for $ s > c $.
\end{theorem}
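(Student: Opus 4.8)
The plan is to reduce the convergence question to the classical Laplace transform via the substitution $u = \Psi(t)$, which is exactly the operational-calculus philosophy underlying the paper. First I would split the defining integral \eqref{def} as $\int_0^\infty = \int_0^T + \int_T^\infty$, with $T$ the threshold from Definition \ref{Def:psiexp}. On the compact piece $[0,T]$ piecewise continuity makes $f$ bounded, say $|f|\le B$; since $\Psi$ is non-negative and increasing we have $e^{-s\Psi(t)}\le 1$ for $s>0$, so the integrand is dominated by $B\,\Psi'(t)$, whose integral is $B\big(\Psi(T)-\Psi(0)\big)=B\,\Psi(T)<\infty$. This controls the near-origin part without using the growth hypothesis.

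For the tail I would estimate in absolute value and insert the $\Psi$-exponential order bound $|f(t)|\le M e^{c\Psi(t)}$ valid for $t>T$:
\[
\left| \int_T^\infty e^{-s\Psi(t)} \Psi'(t) f(t)\,\mathrm{d}t \right| \le M \int_T^\infty e^{-(s-c)\Psi(t)} \Psi'(t)\,\mathrm{d}t.
\]
The key step is the change of variable $u=\Psi(t)$, $\mathrm{d}u=\Psi'(t)\,\mathrm{d}t$, legitimate because $\Psi$ is increasing and differentiable and hence a genuine substitution onto its range $[0,\Psi(\infty))$, where $\Psi(\infty):=\lim_{t\to\infty}\Psi(t)$. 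The tail becomes $M\int_{\Psi(T)}^{\Psi(\infty)} e^{-(s-c)u}\,\mathrm{d}u$, an elementary exponential integral that converges exactly when $s>c$, with the explicit bound $\tfrac{M}{s-c}\,e^{-(s-c)\Psi(T)}$.

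The only point requiring a little care --- rather than a real obstacle --- is that $\Psi(\infty)$ may be finite or infinite. If $\Psi$ is unbounded the upper endpoint contributes nothing, since $e^{-(s-c)u}\to 0$ as $u\to\infty$ for $s>c$; if $\Psi$ is bounded the range of integration is finite and convergence is automatic. In both cases the displayed bound holds, so the integral in \eqref{def} converges absolutely for every real $s>c$, as claimed. Finally I would observe that this argument is nothing more than the identity $\mathcal{L}_\Psi\{f\}=\mathcal{L}\{f\circ\Psi^{-1}\}$ combined with the classical existence theorem, the condition of Definition \ref{Def:psiexp} being precisely the ordinary exponential order of $f\circ\Psi^{-1}$ in the variable $u$; this is the form I expect to reuse throughout the operational-calculus development that follows.
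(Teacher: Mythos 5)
Your proof is correct, and its core is the same estimate the paper uses: dominate $|f(t)|$ by $M e^{c\Psi(t)}$ and reduce to the elementary integral $\int e^{-(s-c)\Psi(t)}\Psi'(t)\,\mathrm{d}t$, which converges precisely for $s>c$. The genuine difference is that you are more careful on two points where the paper's own proof is loose. First, the paper applies the bound $|f(t)|\le M e^{c\Psi(t)}$ on all of $[0,\infty)$, although Definition \ref{Def:psiexp} only guarantees it for $t>T$; your decomposition $\int_0^\infty=\int_0^T+\int_T^\infty$, together with boundedness of the piecewise continuous $f$ on the compact piece, repairs this gap. Second, the paper evaluates $\bigl[e^{-(s-c)\Psi(t)}/(-(s-c))\bigr]_0^\infty$ as $M/(s-c)$, tacitly assuming $\Psi(t)\to\infty$ as $t\to\infty$; your explicit treatment of $\Psi(\infty)$ finite versus infinite covers the bounded case, where convergence still holds but the boundary term at infinity does not vanish. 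Your closing observation that the whole argument is the classical existence theorem read through $\mathcal{L}_\Psi=\mathcal{L}\circ\mathcal{Q}_\Psi^{-1}$ anticipates exactly the operational-calculus identity proved later as Theorem \ref{GLT:theorem}; the only caveat there is that when $\Psi$ is bounded, $f\circ\Psi^{-1}$ is defined only on $[0,\Psi(\infty))$ and must be extended by zero for that identity to hold literally. In short: same method, but your version closes small gaps that the paper leaves open, at the cost of a slightly longer argument.
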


\begin{proof}
	For the $\Psi$-exponentially bounded function $f$, we have
	\begin{align*}
	\Big |\mathcal{L}_{\Psi} \left\{ f(t) \right\}\Big | &= \left|\int_{0}^{\infty} e^{-s \Psi(t)} \Psi'(t) f(t)\,\mathrm{d}t	\right| \\
	&\leq \int_{0}^{\infty} e^{-s \Psi(t)} \Psi'(t) \Big |f(t)\Big | \,\mathrm{d}t \\
	&\leq M \int_{0}^{\infty} e^{-s \Psi(t)} \Psi'(t) e^{c \Psi(t)}  \,\mathrm{d}t \\
	&=M\left[\frac{e^{-(s-c)\Psi(t)}}{-(s-c)}\right]_0^{\infty} \\
	&= \frac{M}{s-c}, 
	\end{align*}
	where in the last line we used the assumption that $s>c$. Therefore, the integral is convergent and the proof of the Theorem is complete.
\end{proof}

\begin{remark}
	From the above proof, it follows that $ \lim_{s \to \infty} 	\mathcal{L}_{\Psi} \left\{ f(t) \right\} = 0 $ for any $\Psi$-exponentially bounded function $f$. This property can be called the limiting property of the generalised Laplace transform.
\end{remark}

In the following theorem, we prove a new relationship between the classical Laplace transform and generalised Laplace transform. This is motivated by the operational calculus approach to fractional calculus with respect to functions.

\begin{theorem}	\label{GLT:theorem}
	The generalised Laplace transform may be written as a combination of the classical Laplace transform with the operation of composition with $\Psi$ or $\Psi^{-1}$, as follows:
	\begin{equation} \label{GLT:comp}
	\mathcal{L}_\Psi=\mathcal{L}\circ \mathcal{Q}_\Psi^{-1},
	\end{equation}
	where the functional operator $\mathcal{Q}_\Psi$ is defined by \eqref{Qdef} as before.
\end{theorem}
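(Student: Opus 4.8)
The plan is to prove the operator identity \eqref{GLT:comp} by evaluating both sides on an arbitrary function $f$ and reducing the right-hand side to the left-hand side through a single change of variables. First I would unravel the composition appearing on the right: since $\mathcal{Q}_\Psi$ acts by composition with $\Psi$ according to \eqref{Qdef}, its inverse $\mathcal{Q}_\Psi^{-1}$ acts by composition with $\Psi^{-1}$, so that $(\mathcal{Q}_\Psi^{-1} f)(u) = f(\Psi^{-1}(u))$. Feeding this into the classical Laplace transform \eqref{classicallaplacedef} gives
\[
(\mathcal{L}\circ\mathcal{Q}_\Psi^{-1})\{f\}(s) = \int_0^\infty e^{-su}\, f\big(\Psi^{-1}(u)\big)\,\mathrm{d}u.
\]

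Next I would carry out the substitution $u = \Psi(t)$, whence $\mathrm{d}u = \Psi'(t)\,\mathrm{d}t$. The standing hypotheses on $\Psi$---non-negative, increasing, with $\Psi(0)=0$---guarantee that $\Psi$ is invertible and that the lower limit $u=0$ corresponds to $t=0$, while the upper limit $u\to\infty$ corresponds to $t\to\infty$. Under this change of variables the integrand transforms as $e^{-su}f(\Psi^{-1}(u))\,\mathrm{d}u = e^{-s\Psi(t)}\,f\big(\Psi^{-1}(\Psi(t))\big)\,\Psi'(t)\,\mathrm{d}t = e^{-s\Psi(t)}\,f(t)\,\Psi'(t)\,\mathrm{d}t$, which is precisely the integrand appearing in Definition \ref{psilaplacedef}. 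Hence the right-hand side equals $\mathcal{L}_\Psi\{f\}(s)$, and since $f$ was arbitrary the operator identity \eqref{GLT:comp} follows.

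This argument is essentially a routine computation, so I do not expect a genuine obstacle. The one point demanding care is the legitimacy of the substitution across the entire half-line: here I would lean on the assumption that $\Psi$ is strictly increasing, so that $\Psi^{-1}$ exists and is differentiable, together with the requirement that $\Psi$ maps $[0,\infty)$ onto $[0,\infty)$ (equivalently $\Psi(t)\to\infty$ as $t\to\infty$), which ensures that the transformed limits of integration are again $0$ and $\infty$. I would also remark that the identity is to be read in the sense that either side converges exactly when the other does and then takes the same value, so that \eqref{GLT:comp} holds as an equality of operators on their common domain of definition.
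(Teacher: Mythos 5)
Your proof is correct and follows essentially the same route as the paper's own argument: unravel $\mathcal{Q}_\Psi^{-1}$, apply the classical Laplace transform, and perform the substitution $u=\Psi(t)$ to recover the integrand of Definition \ref{psilaplacedef}. Your added remarks on the surjectivity of $\Psi$ onto $[0,\infty)$ and on reading the identity as an equality on the common domain of convergence are points the paper leaves implicit, but the core computation is identical.
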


\begin{proof}
	We here verify that these two operators are the same, by starting with a function $f$ and examining the action of both operators:
	\begin{align*}
	f:t&\mapsto f(t); \\
	\mathcal{Q}_\Psi^{-1}(f):t&\mapsto f(\Psi^{-1}(t)); \\
	\mathcal{L}\circ \mathcal{Q}_\Psi^{-1}(f):t&\mapsto \int_{0}^{\infty} e^{-s t} f(\Psi^{-1}(t))\,\mathrm{d}t=\int_{0}^{\infty} e^{-s \Psi(u)} \Psi'(u) f(u)\,\mathrm{d}u,
	\end{align*}
	whereas
	\begin{align*}
	\mathcal{L}_\Psi \{f\}:t&\mapsto \int_{0}^{\infty} e^{-s \Psi(t)} \Psi'(t) f(t)\,\mathrm{d}t.
	\end{align*}
	Thus the result is clearly seen.
\end{proof}

The result of Theorem \ref{GLT:theorem} will be very useful in understanding the generalised Laplace transform, because many standard results on the classical Laplace transform now extend directly to the corresponding results on the generalised Laplace transform, simply by composition. Here we are following the principle that the easiest way to prove properties of a new mathematical object may be to write it in terms of an old one. We shall see that all the results of \cite{FJAJ} are now very easy to prove by using Theorem \ref{GLT:theorem}, but first we start with two direct natural corollaries of Theorem \ref{GLT:theorem}.

\begin{corollary}\label{IGLT:cor}
	The inverse generalised Laplace transform may be written as a combination of the inverse classical Laplace transform with the operation of composition with $\Psi$ or $\Psi^{-1}$, as follows:
	\begin{equation}
	\label{IGLT:comp}
	\mathcal{L}_\Psi^{-1}=\mathcal{Q}_\Psi \circ \mathcal{L}^{-1},
	\end{equation}
	or in other words
	\begin{equation} \label{inverseGL}
	\mathcal{L}^{-1}_{\Psi} \left\{ F(s)  \right\} =\frac{1}{{2 \pi i}} \int_{c-i\infty}^{c+i\infty} e^{s\Psi(t) }  F(s) \,\mathrm{d}s.
	\end{equation}
\end{corollary}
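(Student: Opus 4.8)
The plan is to derive this corollary directly from Theorem \ref{GLT:theorem} by inverting the operator identity \eqref{GLT:comp}. Since that theorem establishes $\mathcal{L}_\Psi=\mathcal{L}\circ \mathcal{Q}_\Psi^{-1}$ as a composition of two invertible operators, I would simply apply the elementary rule that the inverse of a composition reverses the order and inverts each factor, namely $(A\circ B)^{-1}=B^{-1}\circ A^{-1}$. Applying this with $A=\mathcal{L}$ and $B=\mathcal{Q}_\Psi^{-1}$ gives
\[
\mathcal{L}_\Psi^{-1}=\left(\mathcal{L}\circ \mathcal{Q}_\Psi^{-1}\right)^{-1}=\left(\mathcal{Q}_\Psi^{-1}\right)^{-1}\circ \mathcal{L}^{-1}=\mathcal{Q}_\Psi \circ \mathcal{L}^{-1},
\]
where the last equality uses the obvious fact that $\mathcal{Q}_\Psi$ and $\mathcal{Q}_\Psi^{-1}$ are mutually inverse. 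This yields the operator form \eqref{IGLT:comp}.

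Next I would translate this operator identity into the explicit integral formula \eqref{inverseGL}. Recalling that the classical inverse Laplace transform is given by \eqref{inverselaplace}, I would trace through the action of $\mathcal{Q}_\Psi \circ \mathcal{L}^{-1}$ on a function $F(s)$, mirroring the style of the proof of Theorem \ref{GLT:theorem}. First, $\mathcal{L}^{-1}\{F(s)\}$ is the function $u\mapsto \frac{1}{2\pi i}\int_{c-i\infty}^{c+i\infty} e^{su}F(s)\,\mathrm{d}s$; then applying $\mathcal{Q}_\Psi$, which by \eqref{Qdef} composes with $\Psi$, replaces the argument $u$ by $\Psi(t)$. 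Substituting $u=\Psi(t)$ into the kernel $e^{su}$ produces $e^{s\Psi(t)}$, giving exactly the right-hand side of \eqref{inverseGL}.

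I do not anticipate a genuine obstacle here, as this is a short formal consequence of the previously established composition law. The only point requiring a word of care is the invertibility of the operators involved: $\mathcal{Q}_\Psi$ is invertible precisely because $\Psi$ is an increasing (hence bijective onto its range) function with $\Psi(0)=0$, as assumed in Definition \ref{psilaplacedef}, while $\mathcal{L}$ is invertible on its appropriate domain via \eqref{inverselaplace}. I would state this briefly to justify that the composition-inverse rule applies, and then conclude that both the operator identity \eqref{IGLT:comp} and the integral representation \eqref{inverseGL} hold.
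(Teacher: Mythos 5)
Your proof is correct and matches the paper's intent exactly: the paper presents this as a ``direct natural corollary'' of Theorem \ref{GLT:theorem} with no further argument, and the intended justification is precisely your inversion of the composition $\mathcal{L}_\Psi=\mathcal{L}\circ \mathcal{Q}_\Psi^{-1}$ via $(A\circ B)^{-1}=B^{-1}\circ A^{-1}$, followed by unwinding $\mathcal{Q}_\Psi\circ\mathcal{L}^{-1}$ on $F(s)$ to obtain the Bromwich-type integral \eqref{inverseGL}. Your added remark on the invertibility of $\mathcal{Q}_\Psi$ and $\mathcal{L}$ is a reasonable precaution that the paper itself leaves implicit.
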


\begin{corollary}\label{Coroll:genfn}
	If $f(t)$ is a function whose classical Laplace transform is $F(s)$, then the generalised Laplace transform of the function $f\circ\Psi=f(\Psi(t))$ is also $F(s)$:
	\[
	\mathcal{L}\{f(t)\}=F(s)\quad\Rightarrow\quad\mathcal{L}_{\Psi}\{f(\Psi(t))\}=F(s).
	\]
\end{corollary}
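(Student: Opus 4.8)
The plan is to deduce this immediately from the operational factorisation established in Theorem \ref{GLT:theorem}. The key observation is that the function $f\circ\Psi$ whose generalised transform we wish to compute is exactly $\mathcal{Q}_\Psi f$, where $\mathcal{Q}_\Psi$ is the composition operator from \eqref{Qdef}. So the entire statement reduces to evaluating $\mathcal{L}_\Psi\{\mathcal{Q}_\Psi f\}$ and checking that it returns $F(s)$.

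First I would rewrite the left-hand side using the identity $f(\Psi(t))=(\mathcal{Q}_\Psi f)(t)$, so that $\mathcal{L}_\Psi\{f(\Psi(t))\}=\mathcal{L}_\Psi\{\mathcal{Q}_\Psi f\}$. Next I would substitute the factorisation $\mathcal{L}_\Psi=\mathcal{L}\circ\mathcal{Q}_\Psi^{-1}$ from Theorem \ref{GLT:theorem}, giving $\mathcal{L}_\Psi\{\mathcal{Q}_\Psi f\}=\mathcal{L}\bigl(\mathcal{Q}_\Psi^{-1}(\mathcal{Q}_\Psi f)\bigr)$. The crucial step is then to note that $\mathcal{Q}_\Psi$ and $\mathcal{Q}_\Psi^{-1}$ are mutually inverse functional operators, so that $\mathcal{Q}_\Psi^{-1}\circ\mathcal{Q}_\Psi$ is the identity on functions and the inner composition collapses to $f$ itself. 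What remains is simply $\mathcal{L}\{f\}=F(s)$, which is the desired conclusion.

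The only point requiring care---and it is hardly an obstacle---is justifying that $\mathcal{Q}_\Psi^{-1}$ is a genuine two-sided inverse of $\mathcal{Q}_\Psi$. This rests on the standing hypotheses on $\Psi$: since $\Psi$ is increasing with $\Psi'(t)\neq0$, it is a bijection onto its image and hence admits an inverse $\Psi^{-1}$, for which $(\mathcal{Q}_\Psi^{-1}g)(x)=g(\Psi^{-1}(x))$ and $\Psi\circ\Psi^{-1}=\Psi^{-1}\circ\Psi=\mathrm{id}$. With this in hand the cancellation is immediate, and because the computation merely re-expresses the same convergent integral that defines $F(s)$, no separate convergence analysis is needed: the generalised transform of $f\circ\Psi$ exists precisely when $F(s)$ does.
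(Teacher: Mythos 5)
Your proof is correct and is essentially the argument the paper intends: the corollary is stated as an immediate consequence of Theorem \ref{GLT:theorem}, and your chain $\mathcal{L}_\Psi\{\mathcal{Q}_\Psi f\}=\mathcal{L}\bigl(\mathcal{Q}_\Psi^{-1}\mathcal{Q}_\Psi f\bigr)=\mathcal{L}\{f\}=F(s)$ is exactly that deduction, with the invertibility of $\mathcal{Q}_\Psi$ justified by the standing hypotheses on $\Psi$.
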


\begin{example}[\cite{FJAJ}]
	\begin{itemize}
		\item[(a)]  $ \mathcal{L}_{\Psi} \left\{ (\Psi(t))^{\mu} \right\}=\frac{\Gamma(\mu+1)}{s^{\mu+1}}, \text{\ \ \ \ for \ \ } s>0. $
		\item[(b)]  $ \mathcal{L}_{\Psi} \left\{ e^{a \Psi(t)} \right\} =\frac{1}{s-a}, \text{\ \ \ \ for \ \ } s>a. $
		\item[(c)]  $ \mathcal{L}_{\Psi} \left\{ E_{\mu}\Big(\lambda ({\Psi(t)})^{\mu}\Big)   \right\} =\frac{s^{\mu-1}}{s^{\mu}-\lambda}, \text{\ \ \ \ for \ \ }  \text{Re}(\mu)>0 \text{\ \ \ \ and \ \ }\Big|\frac{\lambda}{s^{\mu}} \Big|<1. $
		\item[(d)]  $ \mathcal{L}_{\Psi} \left\{ ({\Psi(t)})^{\mu-1} E_{\mu,\mu}\Big(\lambda ({\Psi(t)})^{\mu}\Big) \right\} =\frac{1}{s^{\mu}-\lambda}, \text{\ \ \ \ for \ \ }  \text{Re}(\mu)>0 \text{\ \ \ \ and \ \ }\Big|\frac{\lambda}{s^{\mu}} \Big|<1. $
	\end{itemize}
\end{example}

\allowdisplaybreaks
\begin{example}
	Assume that $ \text{Re}(\mu)>0$ and $ \Big|\frac{\lambda}{s^{\mu}} \Big|<1$. If $ E_{\mu,\nu}^{\gamma} $ denotes the Prabhakar function \eqref{Prabhakar}, then we have
	\begin{align*}
	\mathcal{L}_{\Psi} \left\{ ({\Psi(t)})^{\nu-1} E_{\mu,\nu}^{\gamma}\Big(\lambda ({\Psi(t)})^{\mu}\Big) \right\} &=\mathcal{L}_{} \left\{ t^{\nu-1} E_{\mu,\nu}^{\gamma}\left(\lambda t^{\mu}\right) \right\}=  \frac{s^{\mu\gamma-\nu}}{(s^{\mu}-\lambda)^{\gamma}}.
	\end{align*}	
\end{example}

Both of the above Examples can be proved easily using Theorem \ref{GLT:theorem} together with results on standard Laplace transforms of functions such as Lemma \ref{Lem:LapML}.

% \af{*** discuss Laplace--Mellin connection ***}

In the following theorem, we give a direct proof for the uniqueness of the generalised Laplace transform and an inversion formula for it.

\begin{theorem}
	Assume that $f$ and $g$ are piecewise continuous functions on $[0,\infty)$ and of $ \Psi $-exponential order $c>0$. If $\mathcal{L}_{\Psi}\{f\}(s)=\mathcal{L}_{\Psi}\{g\}(s)$ for $ s>c $, then $ f(t)=g(t) $ for all $t\geq0$.
\end{theorem}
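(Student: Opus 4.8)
The plan is to reduce the statement to the classical uniqueness theorem for the ordinary Laplace transform (Lerch's theorem) by means of the conjugation identity already established in Theorem \ref{GLT:theorem}. Writing $\mathcal{Q}_\Psi^{-1}f = f\circ\Psi^{-1}$, the hypothesis $\mathcal{L}_\Psi\{f\}=\mathcal{L}_\Psi\{g\}$ becomes, after applying $\mathcal{L}_\Psi=\mathcal{L}\circ\mathcal{Q}_\Psi^{-1}$,
\[
\mathcal{L}\{f\circ\Psi^{-1}\}(s)=\mathcal{L}\{g\circ\Psi^{-1}\}(s)\qquad\text{for all } s>c.
\]
In this way the entire problem is transported to the classical setting, where injectivity is already known.

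Second, I would verify that the transported functions satisfy the hypotheses of the classical uniqueness theorem. Since $\Psi\in C^{1}$ is increasing, $\Psi^{-1}$ is a continuous increasing bijection from the range of $\Psi$ onto $[0,\infty)$; composing a piecewise continuous function with such a homeomorphism preserves piecewise continuity, so $f\circ\Psi^{-1}$ and $g\circ\Psi^{-1}$ are again piecewise continuous. For the growth condition, the $\Psi$-exponential bound $|f(t)|\le M e^{c\Psi(t)}$ for $t>T$ yields $|f(\Psi^{-1}(\tau))|\le M e^{c\Psi(\Psi^{-1}(\tau))}=M e^{c\tau}$ for $\tau>\Psi(T)$, so $f\circ\Psi^{-1}$ is of ordinary exponential order $c$, and likewise for $g$.

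Third, I would invoke Lerch's classical uniqueness theorem to conclude $f\circ\Psi^{-1}=g\circ\Psi^{-1}$ at every point of continuity (hence everywhere when $f,g$ are continuous), and then undo the composition: substituting $\tau=\Psi(u)$ and using that $\Psi$ is a bijection of $[0,\infty)$ onto its range with $\Psi(0)=0$, we obtain $f(u)=g(u)$ for all $u\ge 0$. The inversion formula recorded in Corollary \ref{IGLT:cor} then follows, since this injectivity guarantees that the operator $\mathcal{Q}_\Psi\circ\mathcal{L}^{-1}$ genuinely recovers $f$ rather than merely a candidate preimage.

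The point requiring care — rather than a deep obstacle — is the range of $\Psi$: since $\Psi$ is only assumed non-negative increasing with $\Psi(0)=0$, its range is $[0,L)$ with $L=\lim_{t\to\infty}\Psi(t)$ possibly finite, in which case $f\circ\Psi^{-1}$ is defined only on $[0,L)$ and the transported identity is the Laplace transform of a function supported there. Lerch's theorem still applies to force equality almost everywhere on $[0,L)$, and the bijectivity of $\Psi$ pulls this back to $[0,\infty)$; when $L=\infty$, as in the standard examples, no such care is needed. The only alternative would be to prove a uniqueness theorem for $\mathcal{L}_\Psi$ from scratch, for instance through a Müntz–Weierstrass density argument applied to the moments $\int_{0}^{\infty} e^{-ns\Psi(t)}\Psi'(t)\,[f(t)-g(t)]\,\mathrm{d}t$, but the conjugation route is far shorter and is the natural one given the operational-calculus philosophy of the paper.
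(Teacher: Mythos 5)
Your proof is correct, but it takes a genuinely different route from the paper. The paper does \emph{not} use the conjugation identity here: it gives a direct, self-contained argument, reducing to the case $\mathcal{L}_\Psi\{f\}=0$, substituting $u=e^{-\Psi(t)}$ to turn the hypothesis into the vanishing of the moments $\int_0^1 u^n r(u)\,\mathrm{d}u$ of $r(u)=u^{s_0}f\left(\Psi^{-1}(-\log u)\right)$ on $[0,1]$, and then running a Weierstrass-type density argument with Gaussian bumps $\hat{r}(u)=\frac{1}{b}e^{-\left((u-u_0)/b\right)^2}$ to force $r\equiv 0$ --- in effect reproving Lerch's theorem in the generalised setting. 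You instead transport the whole problem through $\mathcal{L}_\Psi=\mathcal{L}\circ\mathcal{Q}_\Psi^{-1}$ (Theorem \ref{GLT:theorem}) and invoke the classical uniqueness theorem as a black box; this is shorter, is arguably more faithful to the operational-calculus philosophy the paper itself advocates (ironically, this theorem is the one place the paper abandons that approach), but is not self-contained. Note also that the paper's substitution $u=e^{-\Psi(t)}$ secretly contains the same conjugation: $\Psi^{-1}$ appears inside $r$, so the two proofs are related, with the paper folding the reduction and the classical density argument into a single computation. Two further points in your favour: you flag that equality can only be asserted at points of continuity (the paper's contradiction argument, which needs an interval on which $|r|\geq a>0$, silently has the same limitation, since a function nonzero at finitely many points admits no such interval), and you address the case where $\Psi$ has bounded range $[0,L)$, which both the paper's proof and its Theorem \ref{GLT:theorem} pass over; your observation that Lerch applies to the zero-extension on $[0,L)$ closes that gap. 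One small wrinkle in your step two: when $L<\infty$, discontinuities of $f\circ\Psi^{-1}$ may accumulate at $L$, so strict piecewise continuity of the zero-extension can fail; this is harmless because the classical uniqueness theorem needs only local integrability and exponential order, but the verification should be phrased that way.
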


\begin{proof}
	For the uniqueness, since the generalised Laplace transform is linear, it will be sufficient to prove that if $F(s)= \mathcal{L}_{\Psi}\{f(t)\}(s)=0 $ for all $ s>c $ then $ f(t)=0 $ for all $ t \geq 0 $.
	
	Fixing $ s_{0} > c $ and making the substitution $ u= e^{- \Psi(t)} $ in the integral definition \eqref{psilaplacedef}, for $ s = s_{0} + n +1 $ we get
	\begin{equation} \label{prooffirstequation}
	\begin{aligned}
	0={F}(s)&=\int_{0}^{\infty} e^{-s_{0} \Psi(t)} e^{-n \Psi(t)}e^{- \Psi(t)} \Psi^\prime (t) f(t) dt= \int_{0}^{1} u^{n} \left\{ u^{s_{0}} f\left( \Psi^{-1} \left( -\log u \right) \right) \right\} \,\mathrm{d}u
	\end{aligned}
	\end{equation}
	where $ n=0,1,2,\dots $. Let us define $ r(u)=  u^{s_{0}} f\left( \Psi^{-1} \left( -\log u \right) \right) $, which is a piecewise continuous function on $ (0,1] $ satisfying
	\[
	\lim_{u \to 0}r(u)=\lim_{t \to \infty} e^{-s_{0} \Psi(t)} f(t)=0.
	\]
	If we let $ r(0)=0 $, then $ r $ is a piecewise continuous function satisfying  
	\begin{equation}\label{proof2ndequation}
	\int_{0}^{1} p(u) r(u) \,\mathrm{d}u =0 
	\end{equation}
	where $p$ is any polynomial. Thus, if any function $ \hat{r} $ has a power series expansion which converges uniformly on $ [0,1] $, then Eq. \eqref{proof2ndequation} can be rewritten as 
	\begin{equation}\label{proof3rdequation}
	\int_{0}^{1} \hat{r}(u) r(u) \,\mathrm{d}u =0.
	\end{equation}
	
	Suppose that $r$ is not the zero function; then we can find a point $ u_{0} \in (0,1) $, an interval $ I=[u_{0}-c_{0} ,u_{0}+c_{0} ]  \subset [0,1]$, and a constant $ a $ such that $ r(u) \geq a >0 $ for all $ u \in I $. Assume that $ \hat{r}(u)= \frac{1}{b} e^{-\left(\frac{u-u_{0}}{b} \right)^{2}} $, where $ b>0 $; then $ \hat{r} $ has a power series expansion which converges uniformly on $ [0,1] $, so that Eq. \eqref{proof3rdequation} holds. Thus for $ x=\frac{u-u_{0}}{b}$, we have:
	\begin{align*}
	J_1 &= \int_{u_{0}-c_{0}}^{u_{0}+c_{0}} \hat{r}(u) du = \int_{-c_{0}/b}^{c_{0}/b} e^{-x^{2}} \,\mathrm{d}x, \\
	J_2 &= \int_{u_{0}+c_{0}}^{1} \hat{r}(u) du = \int_{c_{0}/b}^{\left(1-u_{0}\right)/b} e^{-x^{2}} \,\mathrm{d}x, \\
	J_3 &= \int_{0}^{u_{0}-c_{0}} \hat{r}(u) du = \int_{-u_{0}/b}^{-c_{0}/b} e^{-x^{2}} \,\mathrm{d}x.
	\end{align*}
	Since $\int_{-\infty}^{\infty} e^{-x^{2}} \,\mathrm{d}x =\sqrt{\pi}$, then for any sufficiently small $ \epsilon >0 $, there is a $ \delta > 0 $ such that if $ 0 < b \leq \delta $ we have
	\[
	J_1 \geq \frac{\sqrt{\pi}}{2}, \qquad 0 \leq J_2 \leq \epsilon, \qquad 0 \leq J_3 \leq \epsilon.
	\]
	Since $ r(u) \geq a >0 $ for all $ u \in I $ and $\left| r \right| < M $ for some $ M < \infty $, we have
	\[
	\int_I \hat{r}(u) r(u) \,\mathrm{d}u \geq \frac{\sqrt{\pi}a}{2} >0,\ \ \ \ \  \left|  \int_{[0,1]\backslash I} \hat{r}(u) r(u) \,\mathrm{d}u \right|\leq 2 M \epsilon, 
	\]
	and hence 
	\[
	\int_{0}^{1}\hat{r}(u) r(u) \,\mathrm{d}u \geq \frac{\sqrt{\pi}a}{2} - 2 M \epsilon >0,
	\]
	provided $ \epsilon < \frac{\sqrt{\pi}a}{4 M} $, contradicting Eq. \eqref{proof3rdequation}. Thus, $ r $ is the zero function which implies that $ f$ is the zero function. This completes the proof.
\end{proof}

\subsection{Combination with $\Psi$-fractional operators}

In the following theorems, we find the generalised Laplace transforms of the Riemann--Liouville, Caputo, and Hilfer fractional operators with respect to functions (so-called $\Psi$-RL, $ \Psi$-C and $\Psi$-Hilfer operators). Some of the results were already shown in \cite{FJAJ}, but the proofs here are more straightforward, using the operational calculus approach introduced in this paper along with the known Laplace transforms of standard fractional operators.

\begin{theorem}[\cite{FJAJ}] \label{inttrans}
	Let $ \mu >0 $ and let $f$ be a function of $\Psi$-exponential order, piecewise continuous over each finite interval $ [0,T] $. Then
	\begin{equation} \label{integraltrans}
	\mathcal{L}_{\Psi} \left\{ \left(\prescript{}{0}{\mathcal{I}}^{\mu}_{\Psi(t)}f\right)(t) \right\}= s^{-\mu}\mathcal{L}_{\Psi} \left\{ f(t) \right\}.
	\end{equation}
\end{theorem}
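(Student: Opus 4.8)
The plan is to exploit the operational-calculus machinery assembled earlier, reducing the claim to the known classical identity \eqref{LTRLI}. The two essential ingredients are the conjugation formula $\prescript{}{a}{\mathcal{I}}^{\mu}_{\Psi(t)}=\mathcal{Q}_\Psi\circ\prescript{}{\Psi(a)}{\mathcal{I}}^{\mu}_{t}\circ \mathcal{Q}_\Psi^{-1}$ from \eqref{FwrtF:conjug}, specialised to $a=0$ so that $\Psi(a)=\Psi(0)=0$, and the factorisation $\mathcal{L}_\Psi=\mathcal{L}\circ \mathcal{Q}_\Psi^{-1}$ from Theorem \ref{GLT:theorem}. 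The whole proof is then a short composition of operators.

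First I would substitute both of these into the left-hand side, obtaining the chain of operators
\[
\mathcal{L}_{\Psi}\circ\prescript{}{0}{\mathcal{I}}^{\mu}_{\Psi(t)} = \mathcal{L}\circ \mathcal{Q}_\Psi^{-1}\circ\mathcal{Q}_\Psi\circ\prescript{}{0}{\mathcal{I}}^{\mu}_{t}\circ \mathcal{Q}_\Psi^{-1}.
\]
The middle pair $\mathcal{Q}_\Psi^{-1}\circ\mathcal{Q}_\Psi$ collapses to the identity, since $\Psi$ is continuous and strictly increasing and hence invertible on its range, leaving the simpler composition $\mathcal{L}\circ\prescript{}{0}{\mathcal{I}}^{\mu}_{t}\circ \mathcal{Q}_\Psi^{-1}$.

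Next I would apply the classical transform identity \eqref{LTRLI} to the inner function $g=\mathcal{Q}_\Psi^{-1}f$, which pulls out the factor $s^{-\mu}$ and rewrites the expression as $s^{-\mu}\,\mathcal{L}\circ \mathcal{Q}_\Psi^{-1}f$. A final application of Theorem \ref{GLT:theorem} in reverse replaces $\mathcal{L}\circ \mathcal{Q}_\Psi^{-1}$ by $\mathcal{L}_\Psi$, giving exactly $s^{-\mu}\mathcal{L}_\Psi\{f\}$ as required.

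The only delicate point, and more a verification than a genuine obstacle, is confirming the hypothesis needed to invoke \eqref{LTRLI}: namely that $g=\mathcal{Q}_\Psi^{-1}f$, that is $t\mapsto f(\Psi^{-1}(t))$, admits a convergent classical Laplace transform. This holds because $f$ being of $\Psi$-exponential order $c$ means $f(t)=\mathcal{O}(e^{c\Psi(t)})$, so composing with $\Psi^{-1}$ yields $g(t)=\mathcal{O}(e^{ct})$, i.e. ordinary exponential order; piecewise continuity is likewise preserved under the monotone change of variable. Hence the classical transform of $g$ exists for $s>c$ and every operator manipulation above is justified on that half-line, exactly as one would want to match the existence range established in Theorem \ref{existence}.
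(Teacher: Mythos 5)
Your proof is correct and follows essentially the same route as the paper's: both compose the factorisation $\mathcal{L}_\Psi=\mathcal{L}\circ\mathcal{Q}_\Psi^{-1}$ from Theorem \ref{GLT:theorem} with the conjugation identity \eqref{FwrtF:conjug} (at $a=0$, using $\Psi(0)=0$), cancel $\mathcal{Q}_\Psi^{-1}\circ\mathcal{Q}_\Psi$, and invoke the classical relation \eqref{LTRLI} for $g=\mathcal{Q}_\Psi^{-1}f$. Your closing check that $g=f\circ\Psi^{-1}$ is piecewise continuous and of ordinary exponential order, so that \eqref{LTRLI} legitimately applies for $s>c$, is a small justification the paper leaves implicit, and it is a worthwhile addition.
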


\begin{proof} We prove the result by combining Theorem \ref{GLT:theorem} with the known relations \eqref{LTRLI} and \eqref{FwrtF:conjug}:
	\[
	\mathcal{L}_{\Psi}\circ\prescript{}{0}{\mathcal{I}}^{\mu}_{\Psi(t)} = \left(\mathcal{L}\circ \mathcal{Q}_\Psi^{-1}\right)\circ\left(\mathcal{Q}_\Psi\circ\prescript{}{0}{\mathcal{I}}^{\mu}_{t}\circ \mathcal{Q}_\Psi^{-1}\right)=\left(\mathcal{L}\circ\prescript{}{0}{\mathcal{I}}^{\mu}_{t}\right)\circ\mathcal{Q}_\Psi^{-1},
	\]
	therefore, writing $g=\mathcal{Q}_\Psi^{-1}f$ so that $f(t)=g(\Psi(t))$, we have:
	\begin{align*}
	f:t&\mapsto g(\Psi(t)); \\
	\mathcal{Q}_\Psi^{-1}f:t&\mapsto g(t); \\
	\mathcal{L}_{\Psi}\circ\prescript{}{0}{\mathcal{I}}^{\mu}_{\Psi(t)}f:t&\mapsto \left(\mathcal{L}\circ\prescript{}{0}{\mathcal{I}}^{\mu}_{t}\right)g(t)=s^{-\mu}\mathcal{L}\{g(t)\},
	\end{align*}
	where $\mathcal{L}\{g(t)\}=\mathcal{L}_{\Psi} \left\{ f(t) \right\}$ by Corollary \ref{IGLT:cor}.
\end{proof}

\begin{theorem}[\cite{FJAJ}] \label{theoremderivative}
	Assume that $ \mu>0 $, $ m= \lfloor\mu\rfloor +1 $, and $f$ is a function such that $ f(t) $, $ \prescript{}{0}{\mathcal{I}}^{m-\mu}_{\Psi(t)}f(t) $, $ \prescript{}{0}{\mathcal{D}}^{1}_{\Psi(t)} \prescript{}{0}{\mathcal{I}}^{m-\mu}_{\Psi(t)}f(t)$, $\ldots$, $\prescript{}{0}{\mathcal{D}}^{m}_{\Psi(t)} \prescript{}{0}{\mathcal{I}}^{m-\mu}_{\Psi(t)}f(t) $ are continuous on $(0,\infty)$ and of $\Psi$-exponential order, while $ \prescript{R}{0}{\mathcal{D}}^{\mu}_{\Psi(t)}f(t) $ is piecewise continuous on $[0,\infty)$. Then
	\begin{equation}
	\mathcal{L}_{\Psi}\left\{\left(\prescript{R}{0}{\mathcal{D}}^{\mu}_{\Psi(t)}f\right)(t) \right\} = s^{\mu}\mathcal{L}_{\Psi}\{f(t)\} - \sum_{i=0}^{m-1} s^{m-i-1}\left(\prescript{}{0}{\mathcal{I}}^{m-i-\mu}_{\Psi(t)}f\right)(0).
	\end{equation}
\end{theorem}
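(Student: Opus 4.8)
The plan is to follow exactly the operational-calculus strategy used in the proof of Theorem~\ref{inttrans}, now applied to the $\Psi$-RL derivative rather than the $\Psi$-RL integral. First I would combine the factorisation $\mathcal{L}_\Psi=\mathcal{L}\circ\mathcal{Q}_\Psi^{-1}$ from Theorem~\ref{GLT:theorem} with the conjugation relation for the $\Psi$-RL derivative in \eqref{FwrtF:conjug}. Since $\Psi(0)=0$ the lower terminal $\Psi(0)$ coincides with $0$, so that
\[
\mathcal{L}_{\Psi}\circ\prescript{R}{0}{\mathcal{D}}^{\mu}_{\Psi(t)}
=\left(\mathcal{L}\circ\mathcal{Q}_\Psi^{-1}\right)\circ\left(\mathcal{Q}_\Psi\circ\prescript{R}{0}{\mathcal{D}}^{\mu}_{t}\circ\mathcal{Q}_\Psi^{-1}\right)
=\left(\mathcal{L}\circ\prescript{R}{0}{\mathcal{D}}^{\mu}_{t}\right)\circ\mathcal{Q}_\Psi^{-1},
\]
the inner $\mathcal{Q}_\Psi^{-1}\circ\mathcal{Q}_\Psi$ cancelling. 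Writing $g=\mathcal{Q}_\Psi^{-1}f$, so that $f(t)=g(\Psi(t))$, this reduces the problem to computing the classical Laplace transform of $\prescript{R}{0}{\mathcal{D}}^{\mu}_{t}g$, for which I would simply invoke the known formula \eqref{LTRLD} applied to $g$. The main term then follows at once: by Corollary~\ref{Coroll:genfn} we have $\mathcal{L}\{g(t)\}=\mathcal{L}_\Psi\{f(t)\}$, so the leading contribution $s^{\mu}\mathcal{L}\{g(t)\}$ becomes $s^{\mu}\mathcal{L}_\Psi\{f(t)\}$ as required.

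The step I expect to need the most care is translating the boundary terms $\left(\prescript{}{0}{\mathcal{I}}^{m-i-\mu}_{t}g\right)(0)$ produced by \eqref{LTRLD} back into the $\Psi$-setting. Here I would use the conjugation relation for the $\Psi$-RL integral in \eqref{FwrtF:conjug}, which gives $\prescript{}{0}{\mathcal{I}}^{m-i-\mu}_{\Psi(t)}f=\mathcal{Q}_\Psi\left(\prescript{}{0}{\mathcal{I}}^{m-i-\mu}_{t}g\right)$; evaluating at $t=0$ and again using $\Psi(0)=0$ yields
\[
\left(\prescript{}{0}{\mathcal{I}}^{m-i-\mu}_{\Psi(t)}f\right)(0)
=\left(\prescript{}{0}{\mathcal{I}}^{m-i-\mu}_{t}g\right)(\Psi(0))
=\left(\prescript{}{0}{\mathcal{I}}^{m-i-\mu}_{t}g\right)(0),
\]
so each classical initial value coincides with the corresponding $\Psi$-RL initial value. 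Substituting both conversions back into the classical formula then produces the claimed identity term by term.

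Beyond this bookkeeping, the only genuine point to verify is that the regularity hypotheses stated for $f$ (continuity and $\Psi$-exponential order of $f$ together with the iterated $\Psi$-RL integrals and derivatives, and piecewise continuity of $\prescript{R}{0}{\mathcal{D}}^{\mu}_{\Psi(t)}f$) transfer under the composition $g=f\circ\Psi^{-1}$ into precisely the hypotheses on $g$ needed for the classical result \eqref{LTRLD} to be valid. Since $\Psi$ is an increasing $C^1$ bijection with $\Psi(0)=0$, composition with $\Psi^{-1}$ preserves continuity and piecewise continuity, and maps $\Psi$-exponential order into ordinary exponential order; checking that the listed derivative/integral conditions correspond exactly under this composition is where the careful list of conditions in the statement is used, and this is the main obstacle to a fully rigorous argument rather than the formal computation itself.
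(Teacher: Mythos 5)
Your proposal is correct and follows essentially the same operational-calculus route as the paper's own proof: the identical factorisation $\mathcal{L}_{\Psi}\circ\prescript{R}{0}{\mathcal{D}}^{\mu}_{\Psi(t)}=\left(\mathcal{L}\circ\prescript{R}{0}{\mathcal{D}}^{\mu}_{t}\right)\circ\mathcal{Q}_\Psi^{-1}$, the same substitution $g=\mathcal{Q}_\Psi^{-1}f$ followed by the classical formula \eqref{LTRLD}, and the same conversion of the initial-value terms via \eqref{FwrtF:conjug} and $\Psi(0)=0$. The only differences are cosmetic: you cite Corollary \ref{Coroll:genfn} where the paper cites Corollary \ref{IGLT:cor} (yours is arguably the apter reference), and you spell out the transfer of the regularity hypotheses under composition with $\Psi^{-1}$, which the paper leaves implicit.
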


\begin{proof}
	Just like in the previous Theorem, we can show directly that
	\[
	\mathcal{L}_{\Psi}\circ\prescript{R}{0}{\mathcal{D}}^{\mu}_{\Psi(t)}=\left(\mathcal{L}\circ\prescript{R}{0}{\mathcal{D}}^{\mu}_{t}\right)\circ\mathcal{Q}_\Psi^{-1}.
	\]
	Then, again writing $g=\mathcal{Q}_\Psi^{-1}f$ so that $f(t)=g(\Psi(t))$, and using the relation \eqref{LTRLD}, we have:
	\begin{align*}
	f:t&\mapsto g(\Psi(t)); \\
	\mathcal{Q}_\Psi^{-1}f:t&\mapsto g(t); \\
	\mathcal{L}_{\Psi}\circ\prescript{R}{0}{\mathcal{D}}^{\mu}_{\Psi(t)}f:t&\mapsto \left(\mathcal{L}\circ\prescript{R}{0}{\mathcal{D}}^{\mu}_{t}\right)g(t)= s^{\mu} \mathcal{L}\{g(t)\} - \sum_{i=0}^{m-1} s^{m-i-1}\left(\prescript{}{0}{\mathcal{I}}^{m-i-\mu}_{t}g\right)(0).
	\end{align*}
	We know $\mathcal{L}\{g(t)\}=\mathcal{L}_{\Psi} \left\{ f(t) \right\}$ by Corollary \ref{IGLT:cor}. Also $\prescript{}{0}{\mathcal{I}}^{m-i-\mu}_{t}g=\left(\prescript{}{0}{\mathcal{I}}^{m-i-\mu}_{t}\circ\mathcal{Q}_\Psi^{-1}\right)f$, and this, together with \eqref{FwrtF:conjug} and the assumption that $\Psi(0)=0$, gives the result in the desired form.
\end{proof}

\begin{theorem}[\cite{FJAJ}]
	Assume that $ \mu>0 $, $ m= \lfloor\mu\rfloor+1 $, and $f$ is a function such that $ f(t) $, $\prescript{}{0}{\mathcal{D}}^{1}_{\Psi(t)} f(t) $, $ \prescript{}{0}{\mathcal{D}}^{2}_{\Psi(t)} f(t)$, $\ldots$, $\prescript{}{0}{\mathcal{D}}^{m-1}_{\Psi(t)}f(t) $ are continuous on $[0,\infty)$ and of $\Psi$-exponential order, while $   \prescript{C}{0}{\mathcal{D}}^{\mu}_{\Psi(t)}f(t) $ is piecewise continuous on $[0,\infty)$. Then
	\begin{equation}
	\mathcal{L}_{\Psi} \left\{ \left(\prescript{C}{0}{\mathcal{D}}^{\mu}_{\Psi(t)}f\right)(t) \right\} =s^{\mu} \mathcal{L}_{\Psi}\{f(t)\} - \sum_{i=0}^{m-1} s^{\mu-i-1}\left(\prescript{}{0}{\mathcal{D}}^{i}_{\Psi(t)}f\right)(0).
	\end{equation}
\end{theorem}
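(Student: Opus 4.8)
The plan is to follow exactly the same operational-calculus template used in the proofs of Theorems \ref{inttrans} and \ref{theoremderivative}, combining the composition identity of Theorem \ref{GLT:theorem} with the conjugation expression \eqref{FwrtF:conjug} for the $\Psi$-Caputo operator and the known classical Laplace transform \eqref{LTCD}. First I would establish the operator identity
\[
\mathcal{L}_{\Psi}\circ\prescript{C}{0}{\mathcal{D}}^{\mu}_{\Psi(t)}=\left(\mathcal{L}\circ\prescript{C}{0}{\mathcal{D}}^{\mu}_{t}\right)\circ\mathcal{Q}_\Psi^{-1},
\]
which drops out immediately on writing $\mathcal{L}_\Psi=\mathcal{L}\circ\mathcal{Q}_\Psi^{-1}$ and $\prescript{C}{0}{\mathcal{D}}^{\mu}_{\Psi(t)}=\mathcal{Q}_\Psi\circ\prescript{C}{0}{\mathcal{D}}^{\mu}_{t}\circ\mathcal{Q}_\Psi^{-1}$, since then the inner factor $\mathcal{Q}_\Psi^{-1}\circ\mathcal{Q}_\Psi$ cancels to the identity.

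Next I would set $g=\mathcal{Q}_\Psi^{-1}f$, so that $f(t)=g(\Psi(t))$, and apply the identity together with \eqref{LTCD} to obtain
\[
\mathcal{L}_{\Psi}\left\{\left(\prescript{C}{0}{\mathcal{D}}^{\mu}_{\Psi(t)}f\right)(t)\right\}=\left(\mathcal{L}\circ\prescript{C}{0}{\mathcal{D}}^{\mu}_{t}\right)g(t)=s^{\mu}\mathcal{L}\{g(t)\}-\sum_{i=0}^{m-1}s^{\mu-i-1}\left(\prescript{}{0}{\mathcal{D}}^{i}_{t}g\right)(0).
\]
The leading term is handled exactly as before: $\mathcal{L}\{g(t)\}=\mathcal{L}_{\Psi}\{f(t)\}$ by Corollary \ref{IGLT:cor}, so it already matches the desired $s^{\mu}\mathcal{L}_{\Psi}\{f(t)\}$.

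The one step requiring genuine care — and the only substantive difference from the Riemann--Liouville case — is the conversion of the boundary terms. Here I would use that for integer order $i$ the operator $\prescript{}{0}{\mathcal{D}}^{i}_{\Psi(t)}=\left(\tfrac{1}{\Psi'}\tfrac{\mathrm{d}}{\mathrm{d}t}\right)^{i}$ is itself a conjugate of the ordinary derivative, namely $\prescript{}{0}{\mathcal{D}}^{i}_{\Psi(t)}=\mathcal{Q}_\Psi\circ\prescript{}{0}{\mathcal{D}}^{i}_{t}\circ\mathcal{Q}_\Psi^{-1}$, which is just the chain rule applied $i$ times. Rearranging gives $\prescript{}{0}{\mathcal{D}}^{i}_{t}g=\mathcal{Q}_\Psi^{-1}\bigl(\prescript{}{0}{\mathcal{D}}^{i}_{\Psi(t)}f\bigr)$, and evaluating at the origin yields $\left(\prescript{}{0}{\mathcal{D}}^{i}_{t}g\right)(0)=\left(\prescript{}{0}{\mathcal{D}}^{i}_{\Psi(t)}f\right)(\Psi^{-1}(0))=\left(\prescript{}{0}{\mathcal{D}}^{i}_{\Psi(t)}f\right)(0)$, where the final equality uses the standing hypothesis $\Psi(0)=0$ (hence $\Psi^{-1}(0)=0$). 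Substituting both conversions produces the stated formula.

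I do not expect any serious obstacle, since every ingredient is either a cited result or a one-line chain-rule computation; the only thing to watch is the evaluation point $\Psi^{-1}(0)=0$ in the boundary terms, which is precisely where the assumption $\Psi(0)=0$ must be invoked. The continuity and $\Psi$-exponential-order hypotheses on $f,\prescript{}{0}{\mathcal{D}}^{1}_{\Psi(t)}f,\dots,\prescript{}{0}{\mathcal{D}}^{m-1}_{\Psi(t)}f$ are exactly what is needed to guarantee that the classical transform \eqref{LTCD} applies to $g$ and that all the boundary terms are finite, so they require no separate argument.
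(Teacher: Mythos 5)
Your proposal is correct and follows exactly the route the paper takes: the paper's own proof is a one-line remark saying to proceed ``similarly to the previous Theorem'' using the conjugation identity \eqref{FwrtF:conjug}, Theorem \ref{GLT:theorem}, and the classical relation \eqref{LTCD}, and your write-up simply fills in those same steps (including the boundary-term conversion via $\Psi(0)=0$ that the paper leaves implicit).
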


\begin{proof}
	Similarly to the previous Theorem, using the identity \eqref{FwrtF:conjug} together with the result of Theorem \ref{GLT:theorem} and the relation \eqref{LTCD}.
\end{proof}

\begin{theorem}
	Assume $ \mu>0 $, $ m= \lfloor\mu\rfloor+1 $, $ 0 \leq \nu \leq 1 $, and $f$ is a function such that $ f(t) $, $ \prescript{}{0}{\mathcal{D}}^{j}_{\Psi(t)} \prescript{}{0}{\mathcal{I}}^{(1-\nu)(m-\mu)}_{\Psi(t)}f(t) \in  C[0,\infty)$ are of $\Psi$-exponential order for $j=0,1,2,\cdots, m-1$, while $\prescript{}{0}{\mathcal{D}}^{\mu,\nu}_{\Psi(t)}f(t) $ is piecewise continuous on $ [0,\infty) $. Then
	\begin{equation}
	\mathcal{L}_{\Psi} \left\{  \prescript{}{0}{\mathcal{D}}^{\mu,\nu}_{\Psi(t)}f(t) \right\} =s^{\mu} \mathcal{L}_{\Psi}\{f(t)\} - \sum_{i=0}^{m-1} s^{m(1-\nu)+\mu \nu-i-1}\left(\prescript{}{0}{\mathcal{I}}^{(1-\nu)(m-\mu)-i}_{\Psi(t)}f\right)(0).
	\end{equation}	
\end{theorem}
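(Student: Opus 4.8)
The plan is to follow exactly the operational-calculus template used in the three preceding theorems, now feeding in the classical Hilfer Laplace transform \eqref{LTHD} in place of \eqref{LTRLI}, \eqref{LTRLD}, or \eqref{LTCD}. First I would combine the conjugation identity \eqref{FwrtF:Hilfer} for the $\Psi$-Hilfer derivative with the decomposition $\mathcal{L}_\Psi=\mathcal{L}\circ\mathcal{Q}_\Psi^{-1}$ from Theorem \ref{GLT:theorem}. Because $a=0$ and $\Psi(0)=0$, the classical operator appearing in \eqref{FwrtF:Hilfer} has lower terminal $\Psi(0)=0$, so that
\[
\mathcal{L}_{\Psi}\circ\prescript{}{0}{\mathcal{D}}^{\mu,\nu}_{\Psi(t)}=\left(\mathcal{L}\circ\mathcal{Q}_\Psi^{-1}\right)\circ\left(\mathcal{Q}_\Psi\circ\prescript{}{0}{\mathcal{D}}^{\mu,\nu}_{t}\circ\mathcal{Q}_\Psi^{-1}\right)=\left(\mathcal{L}\circ\prescript{}{0}{\mathcal{D}}^{\mu,\nu}_{t}\right)\circ\mathcal{Q}_\Psi^{-1},
\]
the inner pair $\mathcal{Q}_\Psi^{-1}\circ\mathcal{Q}_\Psi$ cancelling to the identity.

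Next, writing $g=\mathcal{Q}_\Psi^{-1}f$ so that $f(t)=g(\Psi(t))$, I would apply this operator identity to $f$ and invoke the classical formula \eqref{LTHD} for $g$, obtaining
\[
\mathcal{L}_{\Psi}\left\{\prescript{}{0}{\mathcal{D}}^{\mu,\nu}_{\Psi(t)}f\right\}=\mathcal{L}\left\{\prescript{}{0}{\mathcal{D}}^{\mu,\nu}_{t}g\right\}=s^{\mu}\mathcal{L}\{g(t)\}-\sum_{i=0}^{m-1}s^{m(1-\nu)+\mu\nu-i-1}\left(\prescript{}{0}{\mathcal{I}}^{(1-\nu)(m-\mu)-i}_{t}g\right)(0).
\]
The leading term is immediately rewritten using $\mathcal{L}\{g(t)\}=\mathcal{L}_{\Psi}\{f(t)\}$, which is Corollary \ref{IGLT:cor} (equivalently Corollary \ref{Coroll:genfn}).

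The only step requiring genuine care — which I would flag as the main point to verify rather than a true obstacle — is matching the boundary terms. I would use the $\Psi$-RL integral conjugation in \eqref{FwrtF:conjug}: with $\alpha=(1-\nu)(m-\mu)-i$ it gives $\prescript{}{0}{\mathcal{I}}^{\alpha}_{\Psi(t)}f=\mathcal{Q}_\Psi\left(\prescript{}{0}{\mathcal{I}}^{\alpha}_{t}g\right)$, where $\Psi(0)=0$ has again been used to fix the lower terminal at $0$. Evaluating at $t=0$ and using $\Psi(0)=0$ once more yields
\[
\left(\prescript{}{0}{\mathcal{I}}^{\alpha}_{\Psi(t)}f\right)(0)=\left(\prescript{}{0}{\mathcal{I}}^{\alpha}_{t}g\right)(\Psi(0))=\left(\prescript{}{0}{\mathcal{I}}^{\alpha}_{t}g\right)(0),
\]
so the classical initial-value data transform directly into their $\Psi$-versions, and substituting into the sum produces the stated formula. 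Finally, I would note that the hypotheses on the continuity and $\Psi$-exponential order of $f$ and of $\prescript{}{0}{\mathcal{D}}^{j}_{\Psi(t)}\prescript{}{0}{\mathcal{I}}^{(1-\nu)(m-\mu)}_{\Psi(t)}f$ are precisely what guarantee that $g$ and its corresponding classical derivatives meet the hypotheses underlying \eqref{LTHD}, so that no convergence issue arises in the passage through the classical transform.
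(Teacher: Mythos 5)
Your proposal is correct and follows essentially the same route as the paper's own proof: conjugating via \eqref{FwrtF:Hilfer} and Theorem \ref{GLT:theorem} to reduce to the classical Hilfer formula \eqref{LTHD}, then converting $\mathcal{L}\{g\}$ and the initial-value terms back to the $\Psi$-setting using $\Psi(0)=0$. In fact you spell out the boundary-term conversion via \eqref{FwrtF:conjug} explicitly, whereas the paper only gestures at it (``in the same way as in Theorem \ref{theoremderivative}''), so your write-up is a slightly more complete version of the same argument.
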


\begin{proof}
	Combining Theorem \ref{GLT:theorem} with the relation \eqref{FwrtF:Hilfer}, we find as before
	\[
	\mathcal{L}_{\Psi}\circ\prescript{}{0}{\mathcal{D}}^{\mu,\nu}_{\Psi(t)} = \left(\mathcal{L}\circ \mathcal{Q}_\Psi^{-1}\right)\circ\left(\mathcal{Q}_\Psi\circ\prescript{}{0}{\mathcal{D}}^{\mu,\nu}_{t}\circ \mathcal{Q}_\Psi^{-1}\right)=\left(\mathcal{L}\circ\prescript{}{0}{\mathcal{D}}^{\mu,\nu}_{t}\right)\circ\mathcal{Q}_\Psi^{-1},
	\]
	therefore, again writing $g=\mathcal{Q}_\Psi^{-1}f$ so that $f(t)=g(\Psi(t))$, and using \eqref{LTHD}, we have:
	\begin{align*}
	f:t&\mapsto g(\Psi(t)); \\
	\prescript{}{0}{\mathcal{D}}^{\mu,\nu}_{t}\circ\mathcal{Q}_\Psi^{-1}f:t&\mapsto \prescript{}{0}{\mathcal{D}}^{\mu,\nu}_{t}g(t); \\
	\mathcal{L}_\Psi \prescript{}{0}{\mathcal{D}}^{\mu,\nu}_{\Psi(t)}f:t&\mapsto \left(\mathcal{L}\circ\prescript{}{0}{\mathcal{D}}^{\mu,\nu}_{t}\right)g(t)=s^{\mu} \mathcal{L}\{g(t)\} - \sum_{i=0}^{m-1} s^{m(1-\nu)+\mu \nu-i-1}(\prescript{}{0}{\mathcal{I}}^{(1-\nu)(m-\mu)-i}_{t}g)(0). 
	\end{align*}
	We know $\mathcal{L}\{g(t)\}=\mathcal{L}_{\Psi} \left\{ f(t) \right\}$ by Corollary \ref{IGLT:cor}. Finally, rewriting the fractional integral initial values in the same way as in Theorem \ref{theoremderivative}, we get the result in the desired form.
\end{proof}

\subsection{Combination with $\Psi$-convolution}

In this section, we consider a type of generalised convolution operation between two functions, which can be called $\Psi$-convolution, and which ties together naturally with the $\Psi$-Laplace transform. Again, the operational calculus approach enables easy proofs for many properties of the $\Psi$-convolution operation.

\begin{definition} \cite{FJAJ}
	Let $f$ and $g$ be of $ \Psi $-exponential order, piecewise continuous functions over each finite interval $ [0,T] $. Then, the $ \Psi $-convolution of $f$ and $g$ is the function $f\ast_\Psi g$ defined by 
	\begin{equation}\label{convolution}
	\big(f*_{\Psi} g\big)(t)= \int_{0}^{t} f\Big( \Psi^{-1} (\Psi(t)-\Psi(\tau))\Big) g(\tau) \Psi'(\tau) \,\mathrm{d}\tau.
	\end{equation}
\end{definition}

In the following theorem, we find a relationship between classical convolution and generalised convolution, using the operational calculus approach.

\begin{theorem}	\label{GLCandLCtheorem}
	Let $f$ and $g$ be of $ \Psi $-exponential order, piecewise continuous functions over each finite interval $ [0,T] $. Then, the following relation holds:
	\[
	f*_{\Psi} g= \mathcal{Q}_\Psi\left((\mathcal{Q}_\Psi^{-1}f) * (\mathcal{Q}_\Psi^{-1}g) \right),
	\]
	or in other words
	\[
	f*_{\Psi} g= \Big((f\circ\Psi^{-1}) * (g\circ\Psi^{-1}) \Big)\circ\Psi,
	\]
	which can also be rewritten as
	\begin{equation}
	\label{convol:reln}
	(f\circ\Psi)*_\Psi(g\circ\Psi)=(f*g)\circ\Psi.
	\end{equation}
\end{theorem}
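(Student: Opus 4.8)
The plan is to verify the third (cleanest) form, namely \eqref{convol:reln}, by direct computation from the definition \eqref{convolution}, and then to observe that the first two displayed forms are merely operator-notation restatements of the same identity. I would begin by writing $F = f\circ\Psi$ and $G = g\circ\Psi$ and substituting these into the definition of the $\Psi$-convolution. The crucial simplification is that the inner argument collapses: $F\big(\Psi^{-1}(\Psi(t)-\Psi(\tau))\big) = f\big(\Psi(t)-\Psi(\tau)\big)$, since $\Psi\circ\Psi^{-1}$ is the identity, while $G(\tau)=g(\Psi(\tau))$. This reduces $(F*_\Psi G)(t)$ to the integral $\int_0^t f(\Psi(t)-\Psi(\tau))\,g(\Psi(\tau))\,\Psi'(\tau)\,\mathrm{d}\tau$.

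The second and decisive step is the change of variables $u = \Psi(\tau)$, for which $\mathrm{d}u = \Psi'(\tau)\,\mathrm{d}\tau$. Because $\Psi$ is increasing (hence injective) and satisfies $\Psi(0)=0$, this substitution is a valid bijection carrying $[0,t]$ onto $[0,\Psi(t)]$, and it transforms the integral into $\int_0^{\Psi(t)} f(\Psi(t)-u)\,g(u)\,\mathrm{d}u$. This is precisely the classical convolution $f*g$ evaluated at $\Psi(t)$, that is $\big((f*g)\circ\Psi\big)(t)$, which establishes \eqref{convol:reln}.

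To recover the first two displayed forms, I would simply note that they are equivalent rewritings of \eqref{convol:reln} under the substitutions $f\mapsto f\circ\Psi^{-1}$ and $g\mapsto g\circ\Psi^{-1}$, using $\mathcal{Q}_\Psi h = h\circ\Psi$ and $\mathcal{Q}_\Psi^{-1}h = h\circ\Psi^{-1}$ together with $\Psi\circ\Psi^{-1}=\mathrm{id}$; no further integration is required. Indeed, making these replacements turns the left-hand side $(f\circ\Psi^{-1})\circ\Psi$ back into $f$ and the right-hand side into $\big((f\circ\Psi^{-1})*(g\circ\Psi^{-1})\big)\circ\Psi$, which is the middle form, identical to the operator form $\mathcal{Q}_\Psi\big((\mathcal{Q}_\Psi^{-1}f)*(\mathcal{Q}_\Psi^{-1}g)\big)$.

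The main obstacle, such as it is, lies not in any deep difficulty but in making the change of variables rigorous rather than merely formal. The hypothesis $\Psi(0)=0$ is essential for the lower limit to map to $0$; without it the reduction to the classical convolution would leave a spurious lower endpoint $\Psi(0)$ and the identity would fail. The monotonicity of $\Psi$ is what guarantees that $u=\Psi(\tau)$ is a genuine (orientation-preserving) change of variable. I would also confirm that the $\Psi$-exponential order and piecewise continuity assumptions ensure that both the $\Psi$-convolution integral and the transformed classical integral converge absolutely on every $[0,T]$, so that the manipulations are legitimate; these are exactly the standing conditions built into the statement.
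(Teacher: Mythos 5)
Your proposal is correct and is essentially the paper's own proof: the paper performs the identical change of variables (there written as $\tau=\Psi(u)$ in the classical convolution, then applying $\mathcal{Q}_\Psi$ to land on \eqref{convol:reln}), so your computation is the same substitution simply run in the reverse direction, from the $\Psi$-convolution down to the classical one. Your added observations on the necessity of $\Psi(0)=0$ and the monotonicity of $\Psi$ match the standing hypotheses of the section and require no further justification.
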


\begin{proof}
	By the definition of classical convolution, and substituting $ \tau = \Psi(u)$ inside the integral, we have
	\begin{align*}
	\left(f * g \right)(t)&= \int_{0}^{t} f(t-\tau) g(\tau) \,\mathrm{d}\tau \\
	&= \int_{0}^{\Psi^{-1}(t)} f\left(t-\Psi(u)\right) g\left(\Psi(u)\right) \Psi ^{\prime} (u)\,\mathrm{d}u.
	\end{align*}
	Applying $ \mathcal{Q}_\Psi $, we find
	\begin{align*}
	(f * g)\circ\Psi(t)&=\int_{0}^{t} f\left(\Psi(t)-\Psi(u)\right) g\left(\Psi(u)\right) \Psi ^{\prime} (u)\,\mathrm{d}u \\
	&=\int_{0}^{t} (f\circ\Psi)\Big( \Psi^{-1} (\Psi(t)-\Psi(u))\Big) (g\circ\Psi)(u) \Psi'(u) \,\mathrm{d}u.
	\end{align*}
	This completes the proof.
\end{proof}

If we interpret convolution (both classical and generalised) as a binary operation acting on two functions, and use the alternative notation $*(f,g)$ and $*_\Psi(f,g)$ instead of $f*g$ and $f*_\Psi g$, then the result of Theorem \ref{GLCandLCtheorem} can be written, like Eq. \eqref{FwrtF:conjug} for $\Psi$-fractional derivatives and integrals, as a conjugation of operators:
\[
*_\Psi = \mathcal{Q}_\Psi\circ * \circ\left(\mathcal{Q}_\Psi^{-1},\mathcal{Q}_\Psi^{-1}\right),
\]
namely
\[
*_\Psi(f,g) = \mathcal{Q}_\Psi\Big(*\left(\mathcal{Q}_\Psi^{-1}f,\mathcal{Q}_\Psi^{-1}g\right)\Big).
\]
In the same way as the conjugation results for fractional operators, this operational representation of $\Psi$-convolution enables many properties to be proved easily. In particular, we consider commutativity, associativity, and distributivity, some of which were seen in \cite{FJAJ} but can now be proved much more easily.

\begin{theorem} 
	Let $f$ and $g$ and $h$ be of $ \Psi $-exponential order, piecewise continuous functions over each finite interval $ [0,T] $, and let $a$ and $b$ be constants. Then
	\begin{enumerate}
		\item[(a)] $f *_{\Psi} g = g *_{\Psi} f$. 
		\item[(b)] $ \left( f *_{\Psi} g \right) *_{\Psi} h = f*_{\Psi} \left(g *_{\Psi} h \right)$. 
		\item[(c)] $ f *_{\Psi} \left(a g + b h\right) = a f*_{\Psi} g + b f*_{\Psi} h$.
	\end{enumerate}
\end{theorem}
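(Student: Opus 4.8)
The plan is to reduce all three properties to the corresponding standard properties of the classical convolution, via the conjugation relation established in Theorem \ref{GLCandLCtheorem}. Recall that this relation may be written as $*_\Psi = \mathcal{Q}_\Psi \circ * \circ (\mathcal{Q}_\Psi^{-1}, \mathcal{Q}_\Psi^{-1})$, so that $f *_\Psi g = \mathcal{Q}_\Psi\big((\mathcal{Q}_\Psi^{-1}f)*(\mathcal{Q}_\Psi^{-1}g)\big)$ for all admissible $f,g$. The key structural observation I would use throughout is that the composition operators $\mathcal{Q}_\Psi$ and $\mathcal{Q}_\Psi^{-1}$ are mutually inverse and linear, each acting by composition with a fixed map, which commutes with pointwise linear combinations. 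First I would note that if $f,g$ are of $\Psi$-exponential order and piecewise continuous on each $[0,T]$, then $\mathcal{Q}_\Psi^{-1}f$ and $\mathcal{Q}_\Psi^{-1}g$ are of ordinary exponential order and piecewise continuous (since $f(\Psi^{-1}(t))=\mathcal{O}(e^{ct})$), so that the classical convolution on the right-hand side is well defined and all the familiar classical convolution identities apply to it.

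For part (a), I would write $f *_\Psi g = \mathcal{Q}_\Psi\big((\mathcal{Q}_\Psi^{-1}f)*(\mathcal{Q}_\Psi^{-1}g)\big)$, invoke the commutativity of the classical convolution to swap the two factors inside the outer operator, and read off $g *_\Psi f$. Part (c) follows in the same spirit: since $\mathcal{Q}_\Psi^{-1}(ag+bh)=a\,\mathcal{Q}_\Psi^{-1}g+b\,\mathcal{Q}_\Psi^{-1}h$ by linearity, bilinearity of the classical convolution lets me expand $(\mathcal{Q}_\Psi^{-1}f)*(a\,\mathcal{Q}_\Psi^{-1}g+b\,\mathcal{Q}_\Psi^{-1}h)$ into $a\,(\mathcal{Q}_\Psi^{-1}f)*(\mathcal{Q}_\Psi^{-1}g)+b\,(\mathcal{Q}_\Psi^{-1}f)*(\mathcal{Q}_\Psi^{-1}h)$, after which linearity of $\mathcal{Q}_\Psi$ distributes the outer operator over the sum to yield $a\,(f *_\Psi g)+b\,(f *_\Psi h)$.

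Part (b) is the one requiring a little more care, so I would carry it out slowly. The idea is that applying $\mathcal{Q}_\Psi^{-1}$ to a $\Psi$-convolution cancels the outer $\mathcal{Q}_\Psi$, giving $\mathcal{Q}_\Psi^{-1}(f *_\Psi g)=(\mathcal{Q}_\Psi^{-1}f)*(\mathcal{Q}_\Psi^{-1}g)$. Using this to rewrite the outer convolution, I obtain $(f *_\Psi g)*_\Psi h=\mathcal{Q}_\Psi\big(((\mathcal{Q}_\Psi^{-1}f)*(\mathcal{Q}_\Psi^{-1}g))*(\mathcal{Q}_\Psi^{-1}h)\big)$, and similarly $f *_\Psi (g *_\Psi h)=\mathcal{Q}_\Psi\big((\mathcal{Q}_\Psi^{-1}f)*((\mathcal{Q}_\Psi^{-1}g)*(\mathcal{Q}_\Psi^{-1}h))\big)$. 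The two right-hand sides then agree by the associativity of the classical convolution.

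The main obstacle I anticipate is not any single computation but the bookkeeping in part (b): one must ensure that the intermediate function $f *_\Psi g$ is itself of $\Psi$-exponential order and piecewise continuous, so that it is a legitimate input to a further $\Psi$-convolution and so that the cancellation $\mathcal{Q}_\Psi^{-1}\mathcal{Q}_\Psi=\mathrm{id}$ may be applied. I would therefore include a short remark verifying that the $\Psi$-convolution preserves these hypotheses, after which the whole theorem collapses to the classical commutativity, associativity and bilinearity of $*$.
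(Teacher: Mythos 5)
Your proposal is correct and takes essentially the same approach as the paper: the paper's own proof simply reduces all three identities to the classical commutativity, associativity and bilinearity of $*$ via the conjugation relation $*_\Psi = \mathcal{Q}_\Psi\circ * \circ\left(\mathcal{Q}_\Psi^{-1},\mathcal{Q}_\Psi^{-1}\right)$ of Theorem \ref{GLCandLCtheorem} and omits the details as straightforward. You merely spell out the bookkeeping (linearity of $\mathcal{Q}_\Psi$, the cancellation $\mathcal{Q}_\Psi^{-1}\mathcal{Q}_\Psi=\mathrm{id}$, and preservation of the exponential-order hypotheses) that the paper leaves implicit.
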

\allowdisplaybreaks

\begin{proof}
	This theorem can be proved by using the result of Theorem \ref{GLCandLCtheorem} together with the corresponding results on classical convolution.  So we omit the straightforward details.
\end{proof}

\begin{remark}
	The set of all generalised Laplace transformable functions forms a commutative semigroup with respect to the binary operation $ *_{\Psi} $. But it does not form a group, because $ f^{-1} *_{\Psi} g $ is not generalised Laplace transformable in general. 
\end{remark}

One of the most important properties of convolution in connection with integral transforms is that the Laplace transform of the convolution of two functions is the product of their Laplace transforms. In the next result, we prove the corresponding theorem for the generalised Laplace transform of the $ \Psi $-convolution of two functions.

\begin{theorem}
	Assume that $f$ and $g$ are piecewise continuous functions on $ [0,T] $ and of $ \Psi $-exponential order $c>0 $. Then,
	\begin{equation}
	\mathcal{L}_\Psi \left\{ f*_{\Psi} g\right\}=\mathcal{L}_\Psi\{f\}\mathcal{L}_\Psi\{g\}.
	\end{equation}
\end{theorem}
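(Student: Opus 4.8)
The plan is to reduce the statement entirely to the classical convolution theorem, exploiting the two operational-calculus identities already in hand. First I would recall Theorem \ref{GLCandLCtheorem}, which expresses the generalised convolution as a conjugate of the classical one, $f *_{\Psi} g = \mathcal{Q}_\Psi\bigl((\mathcal{Q}_\Psi^{-1}f)*(\mathcal{Q}_\Psi^{-1}g)\bigr)$, together with Theorem \ref{GLT:theorem}, which states $\mathcal{L}_\Psi = \mathcal{L}\circ\mathcal{Q}_\Psi^{-1}$. Applying the latter to the former and using that $\mathcal{Q}_\Psi^{-1}\circ\mathcal{Q}_\Psi$ is the identity, the two outer operators cancel and the composition collapses to $\mathcal{L}_\Psi\{f *_{\Psi} g\} = \mathcal{L}\bigl((\mathcal{Q}_\Psi^{-1}f)*(\mathcal{Q}_\Psi^{-1}g)\bigr)$. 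In this way the whole problem is transported from the generalised setting into the classical Laplace setting, where the desired product rule is already known.

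Next I would invoke the classical convolution theorem $\mathcal{L}\{u * v\} = \mathcal{L}\{u\}\,\mathcal{L}\{v\}$ with the choices $u = \mathcal{Q}_\Psi^{-1}f$ and $v = \mathcal{Q}_\Psi^{-1}g$. This immediately gives $\mathcal{L}\bigl((\mathcal{Q}_\Psi^{-1}f)*(\mathcal{Q}_\Psi^{-1}g)\bigr) = \mathcal{L}\{\mathcal{Q}_\Psi^{-1}f\}\,\mathcal{L}\{\mathcal{Q}_\Psi^{-1}g\}$, and a final application of Theorem \ref{GLT:theorem} read in the forward direction rewrites each factor as a generalised transform, $\mathcal{L}\{\mathcal{Q}_\Psi^{-1}f\} = \mathcal{L}_\Psi\{f\}$ and likewise for $g$. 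Chaining these equalities yields exactly $\mathcal{L}_\Psi\{f *_{\Psi} g\} = \mathcal{L}_\Psi\{f\}\,\mathcal{L}_\Psi\{g\}$, as required.

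The only point genuinely requiring care, and the step I expect to be the main obstacle, is verifying that the classical convolution theorem is legitimately applicable to $u$ and $v$, since it presumes that these functions are piecewise continuous and of ordinary exponential order on $[0,\infty)$. Here one must translate the $\Psi$-exponential order hypothesis. Since $f(t) = \mathcal{O}(e^{c\Psi(t)})$ as $t\to\infty$, and $\Psi$ is increasing with $\Psi(0)=0$, writing $u(\tau) = f(\Psi^{-1}(\tau))$ gives $u(\tau) = \mathcal{O}(e^{c\tau})$, so $u$ is of classical exponential order $c$; piecewise continuity of $u$ follows from that of $f$ together with the continuity and monotonicity of $\Psi^{-1}$, and the same reasoning applies to $v$. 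Once these hypotheses are confirmed, the existence of every transform and convolution appearing in the argument is assured (in particular by Theorem \ref{existence}), and the entire chain of equalities is rigorously justified.
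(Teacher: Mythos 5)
Your proposal is correct and follows essentially the same route as the paper: compose the conjugation identity for $*_\Psi$ (Theorem \ref{GLCandLCtheorem}) with $\mathcal{L}_\Psi=\mathcal{L}\circ\mathcal{Q}_\Psi^{-1}$ (Theorem \ref{GLT:theorem}), cancel $\mathcal{Q}_\Psi^{-1}\circ\mathcal{Q}_\Psi$, invoke the classical convolution theorem, and translate back. Your final paragraph checking that $\mathcal{Q}_\Psi^{-1}f$ and $\mathcal{Q}_\Psi^{-1}g$ are piecewise continuous and of ordinary exponential order is a worthwhile addition that the paper's own proof leaves implicit.
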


\begin{proof}
	We prove this theorem by using the result of Theorem \ref{GLCandLCtheorem} together with Theorem \ref{GLT:theorem}:
	\[
	\left(\mathcal{L}_\Psi\right)\circ\left(*_\Psi\right)=\left(\mathcal{L}\circ\mathcal{Q}_\Psi^{-1}\right)\circ\left(\mathcal{Q}_\Psi\circ * \circ\left(\mathcal{Q}_\Psi^{-1},\mathcal{Q}_\Psi^{-1}\right)\right)=\left(\mathcal{L}\circ*\right)\circ\left(\mathcal{Q}_\Psi^{-1},\mathcal{Q}_\Psi^{-1}\right),
	\]
	therefore
	\[
	\left(\mathcal{L}_\Psi\right)\circ\left(*_\Psi\right)(f,g)=\left(\mathcal{L}\circ*\right)\left(\mathcal{Q}_\Psi^{-1}f,\mathcal{Q}_\Psi^{-1}g\right)=\mathcal{L}\left(\mathcal{Q}_\Psi^{-1}f\right)\mathcal{L}\left(\mathcal{Q}_\Psi^{-1}g\right)=\mathcal{L}_\Psi\{f\}\mathcal{L}_\Psi\{g\}.
	\]
	Hence, we get our desired result.
\end{proof}

%%%%%%%%%%%%%%%%%%%%%%%%%%%%%%%%%%%%%%%%%%%%%%%%%%%%%%%%%%%%%%%%%%%%%%%%%%%%%%%%%%%%%
\section{A regularity result for $\Psi$-fractional differential equations}

In this section, we examine the effectiveness of the generalised Laplace transform method for solving fractional-order differential equations of the following type:
\begin{align}
\prescript{C}{0}{\mathcal{D}}^{\mu}_{\Psi(t)}\boldsymbol{y}(t)&=A\boldsymbol{y}(t)+\boldsymbol{g}(t), \qquad 0<\mu<1, \qquad t\geq0, \label{de}
\\ \boldsymbol{y}(0)&= \boldsymbol{\eta},  \label{ic}
\end{align}
where $ \prescript{C}{0}{\mathcal{D}}^{\mu}_{\Psi(t)} $ is the Caputo-type fractional differential operator with respect to $\Psi(t)$, $ A = \left(a_{ij}\right) $ is an $n\times n$ constant matrix, and $ \boldsymbol{g}$ is an $n$-dimensional continuous function.

\begin{theorem} \label{theorem5point1}
	Assume that the system $\eqref{de}-\eqref{ic}$ has a unique and continuous solution $ \boldsymbol{y}(t) $. If the forcing function $ \boldsymbol{g}(t)$ is continuous on $[0,\infty) $ and $ \Psi $-exponentially bounded, then the functions $ \boldsymbol{y}(t) $ and $ \prescript{C}{0}{\mathcal{D}}^{\mu}_{\Psi(t)}\boldsymbol{y}(t) $ are both $ \Psi $-exponentially bounded too.
\end{theorem}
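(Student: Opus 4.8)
The plan is to exploit the operational-calculus philosophy of the paper: conjugate the entire problem by $\mathcal{Q}_\Psi$ so that it becomes a classical Caputo system, prove the regularity there, and then pull the conclusion back through $\mathcal{Q}_\Psi$. Concretely, I would set $\boldsymbol{z}=\mathcal{Q}_\Psi^{-1}\boldsymbol{y}$ and $\boldsymbol{h}=\mathcal{Q}_\Psi^{-1}\boldsymbol{g}$, so that $\boldsymbol{y}(t)=\boldsymbol{z}(\Psi(t))$ and $\boldsymbol{g}(t)=\boldsymbol{h}(\Psi(t))$. Applying $\mathcal{Q}_\Psi^{-1}$ to both sides of \eqref{de} and using the conjugation identity \eqref{FwrtF:conjug} together with $\Psi(0)=0$, the $\Psi$-fractional system \eqref{de}--\eqref{ic} becomes the ordinary Caputo system
\[
\prescript{C}{0}{\mathcal{D}}^{\mu}_{t}\boldsymbol{z}(t)=A\boldsymbol{z}(t)+\boldsymbol{h}(t),\qquad \boldsymbol{z}(0)=\boldsymbol{\eta},\qquad 0<\mu<1.
\]
The key dictionary is that, since $\Psi$ is an increasing bijection of $[0,\infty)$ with $\Psi(0)=0$, a function is of $\Psi$-exponential order in the sense of Definition \ref{Def:psiexp} exactly when its pullback by $\Psi^{-1}$ is of ordinary exponential order: from $\|\boldsymbol{g}(t)\|\le M e^{c\Psi(t)}$ one obtains $\|\boldsymbol{h}(t)\|=\|\boldsymbol{g}(\Psi^{-1}(t))\|\le M e^{ct}$, and conversely. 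Thus $\boldsymbol{h}$ is continuous and classically exponentially bounded, and it is enough to prove that $\boldsymbol{z}$ and $\prescript{C}{0}{\mathcal{D}}^{\mu}_{t}\boldsymbol{z}$ are classically exponentially bounded.

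For the classical system I would pass to the equivalent Volterra integral equation, obtained by applying $\prescript{}{0}{\mathcal{I}}^{\mu}_{t}$ and using $\prescript{}{0}{\mathcal{I}}^{\mu}_{t}\,\prescript{C}{0}{\mathcal{D}}^{\mu}_{t}\boldsymbol{z}=\boldsymbol{z}-\boldsymbol{\eta}$ (valid since $0<\mu<1$, so $m=1$):
\[
\boldsymbol{z}(t)=\boldsymbol{\eta}+\frac{1}{\Gamma(\mu)}\int_{0}^{t}(t-\tau)^{\mu-1}\big(A\boldsymbol{z}(\tau)+\boldsymbol{h}(\tau)\big)\,\mathrm{d}\tau.
\]
Taking norms and estimating the forcing contribution by a direct computation, $\frac{1}{\Gamma(\mu)}\int_0^t (t-\tau)^{\mu-1}\|\boldsymbol{h}(\tau)\|\,\mathrm{d}\tau\le \frac{M}{c^{\mu}}e^{ct}$ (after enlarging $M$ so that the bound on $\boldsymbol{h}$ holds on all of $[0,\infty)$), leads to an inequality $\|\boldsymbol{z}(t)\|\le a(t)+\frac{\|A\|}{\Gamma(\mu)}\int_0^t(t-\tau)^{\mu-1}\|\boldsymbol{z}(\tau)\|\,\mathrm{d}\tau$ with the nondecreasing majorant $a(t)=\|\boldsymbol{\eta}\|+\frac{M}{c^{\mu}}e^{ct}$. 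A fractional Gronwall inequality then yields $\|\boldsymbol{z}(t)\|\le a(t)\,E_{\mu}\!\big(\|A\|\,t^{\mu}\big)$; alternatively one could insert the explicit Mittag-Leffler variation-of-constants representation of $\boldsymbol{z}$ and bound each term, but the Gronwall route avoids deriving that formula.

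The exponential bound now follows from the growth of the Mittag-Leffler function: for $0<\mu<1$ the asymptotics of $E_{\mu}$ on the positive real axis give $E_{\mu}(\|A\|t^{\mu})=\mathcal{O}\big(e^{\|A\|^{1/\mu}t}\big)$, so $\|\boldsymbol{z}(t)\|\le C e^{c't}$ for a constant $C$ and any $c'>\max\{c,\|A\|^{1/\mu}\}$; hence $\boldsymbol{z}$ is of ordinary exponential order. Feeding this back into the differential equation, $\prescript{C}{0}{\mathcal{D}}^{\mu}_{t}\boldsymbol{z}=A\boldsymbol{z}+\boldsymbol{h}$ is a sum of exponentially bounded functions, hence itself exponentially bounded. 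Reapplying $\mathcal{Q}_\Psi$ and running the exponential-order dictionary in reverse shows that $\boldsymbol{y}=\mathcal{Q}_\Psi\boldsymbol{z}$ and $\prescript{C}{0}{\mathcal{D}}^{\mu}_{\Psi(t)}\boldsymbol{y}=\mathcal{Q}_\Psi\big(\prescript{C}{0}{\mathcal{D}}^{\mu}_{t}\boldsymbol{z}\big)$ are of $\Psi$-exponential order, which is exactly the claim. The main obstacle is obtaining the a priori exponential bound on $\boldsymbol{z}$; once that is in hand the rest (the conjugation, the reduction of the forcing term, and the regularity of the derivative) is bookkeeping, and the bound itself rests on having a fractional Gronwall estimate together with the correct exponential growth rate of $E_{\mu}$ for $\mu\in(0,1)$.
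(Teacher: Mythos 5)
Your proposal is correct in substance, but it takes a genuinely different route from the paper's own proof. The paper never leaves the $\Psi$-setting: it rewrites the IVP as the $\Psi$-Volterra equation \eqref{volterraeq}, splits the integral at the threshold $T$ coming from Definition \ref{Def:psiexp}, multiplies through by $e^{-c\Psi(t)}$, and then applies a $\Psi$-fractional Gr\"onwall inequality (\cite[Corollary 2]{Vanterler da C. Sousa}) together with Bazhlekova's bound $E_{\mu}\big(u(\Psi(t))^{\mu}\big)\le Ce^{u^{1/\mu}\Psi(t)}$; the bound on $\prescript{C}{0}{\mathcal{D}}^{\mu}_{\Psi(t)}\boldsymbol{y}$ then follows from the equation itself, exactly as in your final step. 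You instead conjugate the entire problem by $\mathcal{Q}_\Psi$, reduce to the classical Caputo system (essentially the case treated in \cite{Kexue}, which the paper explicitly cites as the $\Psi(t)=t$ analogue of this theorem), run the classical Volterra--Gr\"onwall--Mittag-Leffler argument there, and pull the estimate back. Your route is arguably more faithful to the operational-calculus philosophy the paper promotes elsewhere, and it lets you invoke off-the-shelf classical lemmas rather than their $\Psi$-analogues; what it costs is an extra standing hypothesis, namely that $\Psi$ is an increasing bijection of $[0,\infty)$ onto itself (so that $\Psi^{-1}$ exists on all of $[0,\infty)$ and the exponential-order dictionary works). The paper's in-place argument never inverts $\Psi$ onto a half-line and so does not formally need this, although the assumption is implicit throughout the generalised-Laplace framework; and if $\Psi$ were bounded, your reduction still survives, since the transformed problem then lives on a bounded interval where the Gr\"onwall estimate gives boundedness outright.

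One small correction: your claim that $\|\boldsymbol{z}(t)\|\le Ce^{c't}$ for any $c'>\max\{c,\|A\|^{1/\mu}\}$ is not right as stated. The majorant is the product $a(t)E_{\mu}(\|A\|t^{\mu})$ with $a(t)=\|\boldsymbol{\eta}\|+\frac{M}{c^{\mu}}e^{ct}$, and this grows like $e^{(c+\|A\|^{1/\mu})t}$, so the correct exponential rate is the sum $c+\|A\|^{1/\mu}$ rather than the maximum --- this is also precisely the rate $\left(\|A\|_{\infty}^{1/\mu}+c\right)$ that the paper obtains. The slip is harmless for the theorem, since any finite exponential rate yields $\Psi$-exponential boundedness after pulling back through $\mathcal{Q}_\Psi$.
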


\begin{proof}
	It can be noticed that the initial value problem $\eqref{de}-\eqref{ic}$ is equivalent to the following Volterra equation:
	\begin{equation} \label{volterraeq}
	\boldsymbol{y}(t)=\boldsymbol{\eta} + \frac{1}{\Gamma(\mu)} \int_{0}^{t} \left( \Psi(t)- \Psi(\tau) \right)^{\mu-1} \Psi'(\tau) \left\{ A\boldsymbol{y}(\tau)+\boldsymbol{g}(\tau) \right\} \,\mathrm{d}\tau, \qquad 0\leq t<\infty.
	\end{equation}
	By assumption, $ \boldsymbol{g}(t) $ is $ \Psi $-exponentially bounded, so there exist positive constants $ c $, $ M $ and large enough $ T $ such that $ \|\boldsymbol{g}(t)\|_{\infty}\leq Me^{c\Psi(t)} $ for all $ t \geq T$. Now, for $ t \geq T$, \eqref{volterraeq} can be written as
	\begin{multline} 
	\boldsymbol{y}(t)=\boldsymbol{\eta} + \frac{1}{\Gamma(\mu)} \int_{0}^{T} \left( \Psi(t)- \Psi(\tau) \right)^{\mu-1} \Psi'(\tau) \left\{ A\boldsymbol{y}(\tau)+\boldsymbol{g}(\tau) \right\} \,\mathrm{d}\tau\\+ \frac{1}{\Gamma(\mu)} \int_{T}^{t} \left( \Psi(t)- \Psi(\tau) \right)^{\mu-1} \Psi'(\tau) \left\{ A\boldsymbol{y}(\tau)+\boldsymbol{g}(\tau) \right\}\,\mathrm{d}\tau.
	\end{multline}
	Since $ \boldsymbol{y}(t) $ is the unique and continuous solution of $\eqref{de}-\eqref{ic}$ on $ [0,\infty) $, we know that $ A\boldsymbol{y}(t)+\boldsymbol{g}(t) $ is bounded on $ [0,T] $. That is, there exists a constant $\ell>0$ such that $\|A\boldsymbol{y}(t)+\boldsymbol{g}(t)\|_{\infty}<\ell$. So,
	\begin{align*} 
	\|\boldsymbol{y}(t)\|_{\infty} \leq \|\boldsymbol{\eta}\|_{\infty} &+ \frac{\ell}{\Gamma(\mu)} \int_{0}^{T} \left( \Psi(t)- \Psi(\tau) \right)^{\mu-1} \Psi'(\tau) \,\mathrm{d}\tau\\
	&+ \frac{1}{\Gamma(\mu)} \int_{T}^{t} \left( \Psi(t)- \Psi(\tau) \right)^{\mu-1} \Psi'(\tau) \|A\|_{\infty} \ \|\boldsymbol{y}(\tau)\|_{\infty}\,\mathrm{d}\tau \\
	&+ \frac{1}{\Gamma(\mu)}\int_{T}^{t} \left(\Psi(t)-\Psi(\tau) \right) ^{\mu-1} \Psi'(\tau) \|\boldsymbol{g}(\tau)\|_{\infty} \,\mathrm{d}\tau,
	\end{align*}
	where the $ \infty $-norm of the matrix $ A $ is defined as $\|A\|_{\infty}= \max_{1 \leq i \leq n} \sum_{j=1}^{n} |a_{ij}| $. Using the facts that $ e^{-c\Psi(t)} \leq  e^{-c\Psi(T)}$ and $ e^{-c\Psi(t)} \leq  e^{-c\Psi(\tau)}$ and $\|\boldsymbol{g}(t)\|_{\infty} \leq Me^{c\Psi(t)} $, and multiplying the above inequality by $ e^{-c\Psi(t)} $, we find
	\allowdisplaybreaks
	\begin{align*}
	\|\boldsymbol{y}(t)\|_{\infty}e^{-c\Psi(t)} &\leq \|\boldsymbol{\eta}\|_{\infty}e^{-c\Psi(t)} + \frac{\ell e^{-c\Psi(t)}}{\Gamma(\mu)} \int_{0}^{T} \left( \Psi(t)- \Psi(\tau) \right)^{\mu-1} \Psi'(\tau) \,\mathrm{d}\tau\\
	&\hspace{2cm}+ \frac{e^{-c\Psi(t)}}{\Gamma(\mu)} \int_{T}^{t} \left( \Psi(t)- \Psi(\tau) \right)^{\mu-1} \Psi'(\tau) \|A\|_{\infty} \ \|\boldsymbol{y}(\tau)\|_{\infty}\,\mathrm{d}\tau \\
	&\hspace{2cm}+ \frac{e^{-c\Psi(t)}}{\Gamma(\mu)} \int_{T}^{t} \left( \Psi(t)- \Psi(\tau) \right)^{\mu-1} \Psi'(\tau) \|\boldsymbol{g}(\tau)\|_{\infty} \,\mathrm{d}\tau \\
	&\leq \|\boldsymbol{\eta}\|_{\infty}e^{-c\Psi(T)} + \frac{le^{-c\Psi(T)}}{\mu\Gamma(\mu)}  \Big( (\Psi(t))^{\mu}- (\Psi(t)-\Psi(T))^{\mu} \Big) \\
	&\hspace{2cm}+ \frac{\|A\|_{\infty}}{\Gamma(\mu)} \int_{0}^{t} \left( \Psi(t)- \Psi(\tau) \right)^{\mu-1} \Psi'(\tau)  \ \|\boldsymbol{y}(\tau)\|_{\infty} e^{-c\Psi(\tau)} \,\mathrm{d}\tau \\
	&\hspace{2cm}+ \frac{M}{\Gamma(\mu)} \int_{0}^{t} \left( \Psi(t)- \Psi(\tau) \right)^{\mu-1} \Psi'(\tau) e^{c(\Psi(\tau)-\Psi(t))}  \,\mathrm{d}\tau \\
	&\leq \|\boldsymbol{\eta}\|_{\infty}e^{-c\Psi(T)}+\frac{\ell(\Psi(T))^{\mu} e^{-c\Psi(T)}}{\Gamma(\mu+1)} + \frac{M}{\Gamma(\mu)} \int_{0}^{\infty} e^{-c s} s^{\mu-1} \,\mathrm{d}s \\
	&\hspace{2cm}+ \frac{\|A\|_{\infty}}{\Gamma(\mu)} \int_{0}^{t} \left( \Psi(t)- \Psi(\tau) \right)^{\mu-1} \Psi'(\tau)  \ \|\boldsymbol{y}(\tau)\|_{\infty} e^{-c\Psi(\tau)} \,\mathrm{d}\tau \\
	&\leq \|\boldsymbol{\eta}\|_{\infty}e^{-c\Psi(T)} + \frac{\ell(\Psi(T))^{\mu} e^{-c\Psi(T)}}{\Gamma(\mu+1)}+ \frac{M}{c^{\mu}}  \\
	&\hspace{2cm}+ \frac{\|A\|_{\infty}}{\Gamma(\mu)} \int_{0}^{t} \left( \Psi(t)- \Psi(\tau) \right)^{\mu-1} \Psi'(\tau)  \ \|\boldsymbol{y}(\tau)\|_{\infty} e^{-c\Psi(\tau)} \,\mathrm{d}\tau.
	\end{align*}
	Define positive constants $a$, $b$ by
	\[
	a=\|\boldsymbol{\eta}\|_{\infty}e^{-c\Psi(T)} + \frac{\ell(\Psi(T))^{\mu} e^{-c\Psi(T)}}{\Gamma(\mu+1)}+ \frac{M}{c^{\mu}}, \qquad  b=\frac{\|A\|_{\infty}}{\Gamma(\mu)},\]
	and then we have
	\[
	\|\boldsymbol{y}(t)\|_{\infty}e^{-c\Psi(t)} \leq a+b \int_{0}^{t} \left( \Psi(t)- \Psi(\tau) \right)^{\mu-1} \Psi'(\tau) \ \|\boldsymbol{y}(\tau)\|_{\infty} e^{-c\Psi(\tau)} \,\mathrm{d}\tau. \nonumber
	\]
	Using the Gr\"onwall inequality proved in \cite[Corollary 2]{Vanterler da C. Sousa}, we deduce
	\begin{align} \label{eq10}
	\|\boldsymbol{y}(t)\|_{\infty}e^{-c\Psi(t)} &\leq a E_{\mu} \Big(b\Gamma(\mu)\left( \Psi(t)-\Psi(0) \right)^{\mu} \Big) = a E_{\mu} \Big( \|A\|_{\infty} \left( \Psi(t) \right)^{\mu} \Big). 
	\end{align}
	For $ 0< \mu<2 $, $ u>0 $, $ t \geq 0 $, the following inequality is shown in \cite[Eq. (2.9)]{bazhlekova}:
	\begin{equation} \label{eq11}
	E_{\mu} \Big( u \left( \Psi(t) \right)^{\mu} \Big) \leq C e^{u^{{1}/{\mu}}\Psi(t)},
	\end{equation}
	where $C>0$ is independent of $t$ and $u$. From \eqref{eq10} and \eqref{eq11}, we have
	\[
	\|\boldsymbol{y}(t)\|_{\infty}e^{-c\Psi(t)} \leq a C e^{(\|A\|_{\infty})^{{1}/{\mu}}\Psi(t)},
	\]
	which means
	\[
	\|\boldsymbol{y}(t)\|_{\infty} \leq a C e^  { \left\{(\|A\|_{\infty})^{{1}/{\mu}}+c  \right\} \Psi(t)}.	
	\]
	Thus, $ \boldsymbol{y}(t) $ is $ \Psi $-exponentially bounded. Moreover, from Eq. \eqref{de}, we have
	\begin{align*}
	\|\prescript{C}{0}{\mathcal{D}}^{\mu}_{\Psi(t)}\boldsymbol{y}(t)\|_{\infty} & \leq \|A\|_{\infty} \ \|\boldsymbol{y}(t)\|_{\infty} +\|\boldsymbol{g}(t)\|_{\infty} \\& \leq a \|A\|_{\infty} C e^  { \left\{(\|A\|_{\infty})^{{1}/{\mu}}+c  \right\} \Psi(t)} +M e^{c \Psi(t)} \\& \leq \Big( a \|A\|_{\infty} C+ M \Big) e^{\left(\|A\|_{\infty}^{1/\mu}+c \right) \Psi(t)}.
	\end{align*} 
	Thus, $\prescript{C}{0}{\mathcal{D}}^{\mu}_{\Psi(t)}\boldsymbol{y}(t)$ is also $ \Psi $-exponentially bounded, and this completes the proof.
\end{proof}
%%%%%%%%%%%%%%%%%%%%%%%%%%%%%%%%%%%%%%%%%%%%%%%%%%%%%%%%%%%%%%%%%%%%%%%%%%%%%%%%%%%%%

Similar results can be proved for fractional-order differential equations in the settings of $\Psi$-RL and $\Psi$-Hilfer fractional derivatives. Moreover, for the case $ \Psi(t)=t $, an analogue of Theorem \ref{theorem5point1} has been seen to hold true in \cite{Kexue}.
%%%%%%%%%%%%%%%%%%%%%%%%%%%%%%%%%%%%%%%%%%%%%%%%%%%%%%%%%%%%%%%%%%%%%%%%%%%%%%%%%%%%%
\section{Applications to fractional differential equations}

In the present section, by using the operational calculus approach developed in this paper for generalised Laplace transforms, we find solutions of some different classes of linear FDEs with constant coefficients, in the settings of $\Psi$-RL, $\Psi$-C and $\Psi$-Hilfer fractional derivatives.

It is well known that sometimes fractional differential equations are more appropriate to model certain systems than classical ones. The generalisation from integer order to fractional order allows for a wider range of behaviours to be described. The same principle applies to the generalisation from operators with respect to $t$ to operators with respect to $\Psi(t)$. For example, Almeida \cite{Almeida} showed that a population growth model could be reproduced more accurately by considering different possible functions $\Psi$.

\subsection{Solutions of some non-homogeneous linear $ \Psi$-RL and $ \Psi$-C FDEs}

\begin{theorem} \label{app1RLFDE}
	For $0<\mu\leq1$ and $\lambda,c\in\mathbb{R}$, the fractional initial value problem
	\begin{align}
	\prescript{R}{0}{\mathcal{D}}^{\mu}_{\Psi(t)}y(t) - \lambda y(t)&=f(t),  \label{app1de} \\
	\prescript{}{0}{\mathcal{I}}^{(1-\mu)}_{\Psi(t)}y(0)&=c, \label{app1ic}
	\end{align} 
	has the solution
	\begin{equation}\label{GRLFDE}
	y(t)=c({\Psi(t)})^{\mu-1} E_{\mu,\mu}\Big(\lambda ({\Psi(t)})^{\mu}\Big)+({\Psi(t)})^{\mu-1} E_{\mu,\mu}\Big(\lambda ({\Psi(t)})^{\mu}\Big) *_{\Psi}f(t).
	\end{equation}
\end{theorem}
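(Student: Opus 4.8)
The plan is to apply the generalised Laplace transform $\mathcal{L}_\Psi$ to both sides of equation \eqref{app1de}, converting the $\Psi$-fractional differential equation into a purely algebraic equation for the transform $Y(s)=\mathcal{L}_\Psi\{y(t)\}$. This is the exact analogue of the classical Laplace-transform method for linear ODEs, and it is made possible by the transform rule for the $\Psi$-RL derivative established earlier together with the $\Psi$-convolution theorem.

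First I would note that for $0<\mu<1$ we have $m=\lfloor\mu\rfloor+1=1$, so the derivative formula of Theorem~\ref{theoremderivative} collapses to a single boundary term, and substituting the initial condition \eqref{app1ic} directly gives
\[
\mathcal{L}_\Psi\left\{\prescript{R}{0}{\mathcal{D}}^{\mu}_{\Psi(t)}y(t)\right\}=s^{\mu}Y(s)-\left(\prescript{}{0}{\mathcal{I}}^{1-\mu}_{\Psi(t)}y\right)(0)=s^{\mu}Y(s)-c.
\]
Applying $\mathcal{L}_\Psi$ to \eqref{app1de} and writing $F(s)=\mathcal{L}_\Psi\{f(t)\}$ then yields $s^{\mu}Y(s)-c-\lambda Y(s)=F(s)$, whence
\[
Y(s)=\frac{c}{s^{\mu}-\lambda}+\frac{1}{s^{\mu}-\lambda}\,F(s).
\]

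Next I would invert each term separately. By part (d) of the first Example above, the function $(\Psi(t))^{\mu-1}E_{\mu,\mu}(\lambda(\Psi(t))^{\mu})$ has generalised Laplace transform $(s^{\mu}-\lambda)^{-1}$, so the first summand inverts to $c(\Psi(t))^{\mu-1}E_{\mu,\mu}(\lambda(\Psi(t))^{\mu})$. For the second summand I would recognise it as a product of two transforms and apply the $\Psi$-convolution theorem, identifying $\frac{1}{s^{\mu}-\lambda}F(s)$ as the transform of $(\Psi(t))^{\mu-1}E_{\mu,\mu}(\lambda(\Psi(t))^{\mu})*_\Psi f(t)$. Adding the two pieces and invoking uniqueness of the generalised Laplace transform (the uniqueness theorem proved above) then produces precisely the claimed formula \eqref{GRLFDE}.

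The main obstacle is not the algebra but the justification of each transform step: the rule of Theorem~\ref{theoremderivative} and the inversion both require $y$ together with the relevant $\Psi$-fractional expressions to be piecewise continuous and of $\Psi$-exponential order, conditions that are only verifiable \emph{after} the solution is known. I would therefore either impose such regularity as a standing hypothesis or appeal to the machinery of Section~4 to secure it a posteriori. One should also treat the endpoint $\mu=1$ with care, since there $m=2$ and the derivative formula acquires an additional boundary term. Finally, it is prudent to substitute the candidate \eqref{GRLFDE} back into \eqref{app1de} and \eqref{app1ic} to confirm directly that it is a genuine solution, as the transform derivation is formal until the requisite convergence and uniqueness conditions are established.
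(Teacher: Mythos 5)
Your proposal is correct, but it takes a genuinely different route from the paper's own proof. You solve the problem by the direct transform method: applying $\mathcal{L}_\Psi$ to \eqref{app1de}, using Theorem~\ref{theoremderivative} with $m=1$ and the initial condition \eqref{app1ic} to get $s^{\mu}Y(s)-c-\lambda Y(s)=F(s)$, inverting the first term of $Y(s)=\bigl(c+F(s)\bigr)/\bigl(s^{\mu}-\lambda\bigr)$ via the known transform of $(\Psi(t))^{\mu-1}E_{\mu,\mu}\bigl(\lambda(\Psi(t))^{\mu}\bigr)$, handling the second term with the $\Psi$-convolution theorem, and finishing with the uniqueness theorem. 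The paper instead uses the operational-calculus conjugation \eqref{FwrtF:conjug}: setting $z=\mathcal{Q}_\Psi^{-1}y$ and $g=\mathcal{Q}_\Psi^{-1}f$, the IVP becomes the classical Riemann--Liouville problem $\prescript{R}{0}{\mathcal{D}}^{\mu}_{t}z-\lambda z=g$ with $\prescript{}{0}{\mathcal{I}}^{1-\mu}_{t}z(0)=c$, whose solution $z(t)=c\,t^{\mu-1}E_{\mu,\mu}(\lambda t^{\mu})+t^{\mu-1}E_{\mu,\mu}(\lambda t^{\mu})*g(t)$ is quoted as known; applying $\mathcal{Q}_\Psi$ and the convolution identity \eqref{convol:reln} then yields \eqref{GRLFDE} immediately. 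The trade-off is exactly the one you identify yourself: your method stays entirely inside the $\Psi$-Laplace framework and showcases the transform machinery, but every step (the derivative rule, the convolution theorem, uniqueness) carries hypotheses of piecewise continuity and $\Psi$-exponential order that cannot be verified before the solution is known, so they must be assumed or recovered a posteriori (e.g.\ via the regularity result of Section~4); the paper's conjugation argument sidesteps all of this, inheriting both the formula and its validity directly from the classical Riemann--Liouville result with no transformability assumptions. Your remark about the endpoint $\mu=1$ (where $m=2$ under the paper's convention $m=\lfloor\mu\rfloor+1$, adding a second boundary term) is a fair caveat, and it applies equally to the paper's proof, which also tacitly relies on the classical solution formula across the whole range $0<\mu\leq1$.
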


\begin{proof}
	Using the identity \eqref{FwrtF:conjug}, the IVP \eqref{app1de}--\eqref{app1ic} can be transformed into the following Riemann--Liouville IVP:
	\begin{align*}
	\prescript{R}{0}{\mathcal{D}}^{\mu}_{t}z(t) - \lambda z(t)&=g(t), \\
	\prescript{}{0}{\mathcal{I}}^{(1-\mu)}_{t}y(0)&=c,
	\end{align*}
	where $z=\mathcal{Q}_\Psi^{-1}y$ and $g=\mathcal{Q}_\Psi^{-1}f$ i.e. $y=z\circ\Psi$ and $f=g\circ\Psi$. This classical Riemann--Liouville initial value problem has the solution
	\begin{equation}\label{app1Speicalsolution}
	z(t)=c{t}^{\mu-1} E_{\mu,\mu}\big(\lambda t^{\mu}\big)+t^{\mu-1} E_{\mu,\mu}\big(\lambda t^{\mu}\big) * g(t).
	\end{equation}
	Applying $ \mathcal{Q}_\Psi $ to both sides of \eqref{app1Speicalsolution}, we get \eqref{GRLFDE} as required.
\end{proof}

\begin{theorem} \label{app2CFDE}
	For $0<\mu\leq1$ and $\lambda,c\in\mathbb{R}$, the fractional initial value problem
	\begin{align}
	\prescript{C}{0}{\mathcal{D}}^{\mu}_{\Psi(t)}y(t) - \lambda y(t)&=f(t), \label{app2de} \\
	y(0)&=c,  \label{app2ic}
	\end{align} 
	has the solution
	\begin{equation} \label{app2answer}
	y(t)=c E_{\mu}\Big(\lambda ({\Psi(t)})^{\mu}\Big)+({\Psi(t)})^{\mu-1} E_{\mu,\mu}\Big(\lambda ({\Psi(t)})^{\mu}\Big) *_{\Psi}f(t).
	\end{equation}
\end{theorem}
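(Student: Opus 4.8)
The plan is to mirror the operational-calculus argument used for Theorem \ref{app1RLFDE}: conjugate the $\Psi$-Caputo problem into an ordinary Caputo problem by means of $\mathcal{Q}_\Psi$, solve the latter by classical techniques, and then transport the solution back through $\mathcal{Q}_\Psi$. Writing $z=\mathcal{Q}_\Psi^{-1}y$ and $g=\mathcal{Q}_\Psi^{-1}f$, so that $y=z\circ\Psi$ and $f=g\circ\Psi$, the Caputo conjugation identity in \eqref{FwrtF:conjug} turns \eqref{app2de} into the constant-coefficient classical Caputo equation $\prescript{C}{0}{\mathcal{D}}^{\mu}_{t}z(t)-\lambda z(t)=g(t)$. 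The initial datum transforms cleanly: since $z(x)=y(\Psi^{-1}(x))$ and $\Psi(0)=0$, the condition \eqref{app2ic} becomes $z(0)=y(\Psi^{-1}(0))=y(0)=c$. This is precisely where the Caputo case is genuinely simpler than the Riemann--Liouville case of Theorem \ref{app1RLFDE}, whose initial condition involves a fractional integral rather than a point value.

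Next I would solve the resulting classical Caputo initial value problem
\[
\prescript{C}{0}{\mathcal{D}}^{\mu}_{t}z(t)-\lambda z(t)=g(t),\qquad z(0)=c.
\]
Applying the ordinary Laplace transform and using the Caputo transform rule \eqref{LTCD}, which for $0<\mu<1$ reduces to $\mathcal{L}\{\prescript{C}{0}{\mathcal{D}}^{\mu}_{t}z\}=s^{\mu}\mathcal{L}\{z\}-s^{\mu-1}z(0)$, one obtains $(s^{\mu}-\lambda)\,\mathcal{L}\{z\}=c\,s^{\mu-1}+\mathcal{L}\{g\}$, and hence
\[
\mathcal{L}\{z\}=\frac{c\,s^{\mu-1}}{s^{\mu}-\lambda}+\frac{\mathcal{L}\{g\}}{s^{\mu}-\lambda}.
\]
Inverting term by term via Lemma \ref{Lem:LapML} — the first summand is the transform of $c\,E_{\mu}(\lambda t^{\mu})$, while $\tfrac{1}{s^{\mu}-\lambda}$ is the transform of $t^{\mu-1}E_{\mu,\mu}(\lambda t^{\mu})$ (the case $\gamma=1$, $\nu=\mu$) — and reading the remaining product as a classical convolution yields
\[
z(t)=c\,E_{\mu}(\lambda t^{\mu})+t^{\mu-1}E_{\mu,\mu}(\lambda t^{\mu})*g(t).
\]

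Finally I would apply $\mathcal{Q}_\Psi$ to recover $y=z\circ\Psi$. The first term yields $c\,E_{\mu}\big(\lambda(\Psi(t))^{\mu}\big)$ directly. For the convolution term the key is Theorem \ref{GLCandLCtheorem}, by which $\mathcal{Q}_\Psi$ carries a classical convolution into the $\Psi$-convolution of the composed factors; applying it gives
\[
\mathcal{Q}_\Psi\!\left(t^{\mu-1}E_{\mu,\mu}(\lambda t^{\mu})*g(t)\right)=(\Psi(t))^{\mu-1}E_{\mu,\mu}\!\big(\lambda(\Psi(t))^{\mu}\big)*_\Psi\big(g\circ\Psi\big)(t),
\]
and since $g\circ\Psi=f$ this is exactly the second term of \eqref{app2answer}. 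Summing the two pieces produces the claimed formula. I expect no serious obstacle: every ingredient is already established, and the only points needing care are the correct invocation of Theorem \ref{GLCandLCtheorem} on the convolution term and the observation that the Caputo initial condition collapses to a plain point value precisely because $\Psi(0)=0$.
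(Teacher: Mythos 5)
Your proposal is correct and follows essentially the same route as the paper: the paper's proof of Theorem \ref{app2CFDE} simply says it is ``similar to that of Theorem \ref{app1RLFDE}'' and omits details, and that omitted argument is precisely your conjugation scheme --- transform via $\mathcal{Q}_\Psi^{-1}$ and \eqref{FwrtF:conjug} to a classical Caputo problem, solve it, and map back with $\mathcal{Q}_\Psi$ using Theorem \ref{GLCandLCtheorem} for the convolution term. The only difference is that you derive the classical solution explicitly by Laplace transform where the paper (in the RL case) just cites it, which is a matter of detail rather than of method.
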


\begin{proof}
	The proof of Theorem \ref{app2CFDE} is similar to that of Theorem \ref{app1RLFDE}. We choose to omit the details involved.
\end{proof}

\begin{corollary}
	For $0<\mu\leq1$, the following special case of the IVP \eqref{app2de}--\eqref{app2ic}:
	\begin{align*}
	\prescript{C}{0}{\mathcal{D}}^{\mu}_{\Psi(t)}y(t) -  y(t)&=1, \\
	y(0)&=1,
	\end{align*}
	has the following solutions for some specific choices of function $\Psi$:
	\begin{enumerate}
	\item[(a)] If $\Psi(t)=\sqrt{t}$, then $ y(t)= E_{\mu}(t^{\frac{\mu}{2}})+t^{\frac{\mu}{2}} E_{\mu,\mu+1}( t^{\frac{\mu}{2}})$.
	\item[(b)] If $\Psi(t)=t$, then $ y(t)= E_{\mu}(t^{\mu})+t^{\mu} E_{\mu,\mu+1}( t^{\mu})$.
	\item[(c)] If $\Psi(t)=t^2$, then $ y(t)= E_{\mu}(t^{2\mu})+t^{2\mu} E_{\mu,\mu+1}( t^{2\mu})$.
\end{enumerate}
\end{corollary}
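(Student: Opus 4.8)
The plan is to recognise this Corollary as the special case $\lambda = c = 1$ and $f(t)\equiv 1$ of Theorem \ref{app2CFDE}. Substituting these values into the solution formula \eqref{app2answer} gives immediately
\[
y(t) = E_\mu\big((\Psi(t))^\mu\big) + (\Psi(t))^{\mu-1} E_{\mu,\mu}\big((\Psi(t))^\mu\big) *_\Psi 1,
\]
so the entire task reduces to evaluating the single $\Psi$-convolution appearing in the second term, and then specialising $\Psi$.

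To evaluate that convolution I would exploit the operational relation \eqref{convol:reln} from Theorem \ref{GLCandLCtheorem}. Writing $h(t) = t^{\mu-1}E_{\mu,\mu}(t^\mu)$, the first factor is $h\circ\Psi$, while the constant $1$ equals $1\circ\Psi$; hence $(h\circ\Psi)*_\Psi(1\circ\Psi) = (h*1)\circ\Psi$, which reduces the $\Psi$-convolution to an ordinary classical convolution followed by composition with $\Psi$. This is where the operational calculus approach pays off: the generalised problem collapses to a standard computation in the Riemann--Liouville setting.

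The key step is then the classical convolution $h*1$. By Lemma \ref{Lem:LapML} (three-parameter version with $\gamma = 1$, $\nu=\mu$, $\lambda = 1$) one has $\mathcal{L}\{h(t)\} = 1/(s^\mu - 1)$, and $\mathcal{L}\{1\} = 1/s$, so $\mathcal{L}\{(h*1)(t)\} = 1/\big(s(s^\mu-1)\big) = s^{-1}/(s^\mu-1)$. Matching this against the same three-parameter Laplace transform with $\gamma = 1$, $\lambda=1$, $\nu = \mu+1$ identifies it as the transform of $t^\mu E_{\mu,\mu+1}(t^\mu)$, and inverting gives $(h*1)(t) = t^\mu E_{\mu,\mu+1}(t^\mu)$. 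Composing with $\Psi$ then yields
\[
(\Psi(t))^{\mu-1} E_{\mu,\mu}\big((\Psi(t))^\mu\big) *_\Psi 1 = (\Psi(t))^\mu E_{\mu,\mu+1}\big((\Psi(t))^\mu\big),
\]
so that in general $y(t) = E_\mu\big((\Psi(t))^\mu\big) + (\Psi(t))^\mu E_{\mu,\mu+1}\big((\Psi(t))^\mu\big)$.

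Finally I would substitute the three specified functions, using $(\Psi(t))^\mu = t^{\mu/2},\, t^\mu,\, t^{2\mu}$ for $\Psi(t) = \sqrt{t},\, t,\, t^2$ respectively, to read off cases (a)--(c) directly. I do not anticipate a genuine obstacle here: the only nontrivial step is the inversion of $1/\big(s(s^\mu-1)\big)$, and even that is pure parameter-matching against Lemma \ref{Lem:LapML} rather than any real difficulty. The one thing to watch is careful bookkeeping of the parameters $(\mu,\nu,\gamma)$ when identifying the three-parameter Mittag-Leffler transform, so that the shift $\nu = \mu+1$ (which produces the $t^\mu$ prefactor in the answer) is not mishandled.
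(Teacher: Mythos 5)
Your proposal is correct, but it takes a genuinely different route from the paper's own proof. The paper works directly with the integral definition \eqref{convolution}: for case (b), $\Psi(t)=t$, it writes the convolution term as $\int_0^t \tau^{\mu-1}E_{\mu,\mu}(\tau^{\mu})\,\mathrm{d}\tau$, expands $E_{\mu,\mu}$ as a power series, integrates term by term, and recognises the result as $t^{\mu}E_{\mu,\mu+1}(t^{\mu})$, dismissing cases (a) and (c) as ``very similar'' manipulations. You instead invoke the operational relation \eqref{convol:reln} to collapse the $\Psi$-convolution to the single classical convolution $h*1$ with $h(t)=t^{\mu-1}E_{\mu,\mu}(t^{\mu})$, evaluate it by matching $1/\bigl(s(s^{\mu}-1)\bigr)$ against Lemma \ref{Lem:LapML}, and only then compose with $\Psi$, which yields the uniform formula
\begin{equation*}
y(t)=E_{\mu}\bigl((\Psi(t))^{\mu}\bigr)+(\Psi(t))^{\mu}E_{\mu,\mu+1}\bigl((\Psi(t))^{\mu}\bigr)
\end{equation*}
for arbitrary admissible $\Psi$ before specialising to the three cases. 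What your route buys: one computation covers all three cases (and any other $\Psi$) at once, and it stays closer to the operational-calculus philosophy that the paper itself promotes. What it costs: it leans on the convolution theorem and the uniqueness of the Laplace transform, whereas the paper's term-by-term integration is elementary and self-contained; indeed, your parameter-matching step is exactly equivalent to the series identity $\int_0^t \tau^{\mu-1}E_{\mu,\mu}(\tau^{\mu})\,\mathrm{d}\tau = t^{\mu}E_{\mu,\mu+1}(t^{\mu})$ that the paper verifies by hand. Both arguments are sound, and your parameter bookkeeping ($\gamma=1$, $\lambda=1$, $\nu=\mu+1$) is accurate.
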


\begin{proof}
	We consider case (b), since the manipulations for (a) and (c) are very similar.
	
	From \eqref{convolution} and \eqref{app2answer}, we have
	\begin{align*}
	y(t)&= E_{\mu}(t^{\mu})+ \int_{0}^{t} {\tau}^{\mu-1} E_{\mu,\mu}( {\tau}^{\mu}) \,\mathrm{d}\tau = E_{\mu}(t^{\mu})+ \int_{0}^{t} \sum_{k=0}^{\infty}  \frac{{\tau}^{\mu k + \mu-1}}{\Gamma(\mu k + \mu)} \,\mathrm{d}\tau \\&=E_{\mu}(t^{\mu})+ \sum_{k=0}^{\infty}  \frac{{t}^{\mu k + \mu}}{\Gamma(\mu k + \mu+1)}=E_{\mu}(t^{\mu})+t^{\mu} E_{\mu,\mu+1}( t^{\mu}),
	\end{align*}
	which is the desired result.
\end{proof}

%%%%%%%%%%%%%%%%%%%%%%%%%%%%%%%%%%%%%%%%%%%%%

\begin{theorem}
	Let $0<\mu\leq1$. The fractional diffusion equation 
	\begin{equation} \label{app3de} 
	\prescript{R}{0}{\mathcal{D}}^{\mu}_{\Psi(t)}u(x,t) = \kappa \frac{\partial^{2}u(x,t)}{\partial  x^{2}},
	\end{equation}
	with initial and boundary conditions
	\begin{align}
	u(x,t) &\to 0 \quad\mathrm{ as }\quad |x| \to \infty,  \label{app3c1} \\
	\prescript{}{0}{\mathcal{I}}^{(1-\mu)}_{\Psi(t)}u(x,t) \Big|_{t=0}&=f(x), \qquad x \in \mathbb{R},  \label{app3c2}
	\end{align} 
	has the solution
	\begin{equation}\label{app3sol}
	u(x,t)= \int_{-\infty}^{\infty} G(x-\eta,t)f(\eta)\,\mathrm{d}\eta,
	\end{equation}
	where $G$ is defined by
	\[
	G(x,t)= \frac{1}{2\sqrt{\kappa}} (\Psi(t))^{\frac{\mu}{2}-1} W \Big( - \frac{|x|}{\sqrt{\kappa}(\Psi(t))^{\frac{\mu}{2}}}, -\frac{\mu}{2}, \frac{\mu}{2} \Big),
	\]
	and $W$ is the Wright function defined in \eqref{Wrightdefn} above.
\end{theorem}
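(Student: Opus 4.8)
The plan is to apply transforms in both variables — the generalised Laplace transform in $t$ and an ordinary Fourier transform in $x$ — and then invert. First I would apply $\mathcal{L}_\Psi$ in the time variable to both sides of \eqref{app3de}. Since $0<\mu\leq1$ forces $m=\lfloor\mu\rfloor+1=1$, the $\Psi$-RL derivative formula of Theorem \ref{theoremderivative} collapses to a single boundary term, giving $\mathcal{L}_\Psi\{\prescript{R}{0}{\mathcal{D}}^{\mu}_{\Psi(t)}u\}=s^{\mu}\tilde u(x,s)-(\prescript{}{0}{\mathcal{I}}^{1-\mu}_{\Psi(t)}u)(x,0)$, where $\tilde u(x,s)=\mathcal{L}_\Psi\{u(x,t)\}$. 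The initial condition \eqref{app3c2} replaces the boundary term by $f(x)$, so the PDE becomes the ordinary differential equation $\kappa\,\partial_x^2\tilde u - s^{\mu}\tilde u = -f(x)$ in the variable $x$, with $s$ as a parameter.

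Next I would solve this ODE on the whole line subject to the decay condition \eqref{app3c1}. Applying the spatial Fourier transform and writing $\hat f$ for the transform of $f$ yields $\hat{\tilde u}(\omega,s)=\hat f(\omega)\big/(\kappa\omega^2+s^{\mu})$. The right-hand side is a product, so $\tilde u(\cdot,s)$ is the classical spatial convolution of $f$ with the inverse Fourier transform of $(\kappa\omega^2+s^{\mu})^{-1}$; a standard residue computation gives this transform as $\tilde G(x,s)=\tfrac{1}{2\sqrt{\kappa}}\,s^{-\mu/2}e^{-|x|s^{\mu/2}/\sqrt{\kappa}}$. Hence $\tilde u(x,s)=\int_{-\infty}^{\infty}\tilde G(x-\eta,s)f(\eta)\,\mathrm{d}\eta$.

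It remains to invert the generalised Laplace transform in $s$. By Corollary \ref{IGLT:cor} this inversion is $\mathcal{Q}_\Psi\circ\mathcal{L}^{-1}$, so I would first invert the classical Laplace transform of $\tilde G(x,\cdot)$ and then replace $t$ by $\Psi(t)$. Expanding $e^{-\lambda s^{\alpha}}$ as a power series and inverting term by term (using $\mathcal{L}^{-1}\{s^{-p}\}=t^{p-1}/\Gamma(p)$, the terms with $\Gamma$ evaluated at nonpositive integers vanishing) gives the identity $\mathcal{L}^{-1}\{s^{-\beta}e^{-\lambda s^{\alpha}}\}=t^{\beta-1}W(-\lambda t^{-\alpha},-\alpha,\beta)$ for $0<\alpha<1$, with $W$ the Wright function \eqref{Wrightdefn}. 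Taking $\alpha=\beta=\mu/2$ and $\lambda=|x|/\sqrt{\kappa}$ (note $0<\mu/2<1$ since $0<\mu\leq1$) and then applying $\mathcal{Q}_\Psi$ produces exactly the stated kernel $G(x,t)$. Because the spatial convolution does not involve $s$, the $t$-inversion passes under the $\eta$-integral, yielding $u(x,t)=\int_{-\infty}^{\infty}G(x-\eta,t)f(\eta)\,\mathrm{d}\eta$ as claimed.

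The main obstacle I anticipate is the term-by-term inversion establishing the Wright-function identity: one must justify interchanging the inverse Laplace transform with the infinite sum and confirm that the terms where $\Gamma$ is evaluated at a nonpositive integer contribute nothing, which is precisely what aligns the series with the entire Wright function. The remaining analytic points — validity of the Fourier inversion, convergence of the defining integrals under the $\Psi$-exponential and decay hypotheses, and the interchange of the spatial convolution with the temporal inversion — are routine by comparison.
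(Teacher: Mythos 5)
Your proposal is correct, but it follows a genuinely different route from the paper's proof. The paper never touches the generalised Laplace transform for this theorem: it sets $v=\mathcal{Q}_\Psi^{-1}u$ in the time variable, uses the conjugation identity \eqref{FwrtF:conjug} to turn \eqref{app3de}--\eqref{app3c2} into the classical Riemann--Liouville diffusion problem for $v(x,t)$, quotes the known solution of that problem from \cite[\S 6.7]{debnath-bhatta}, and applies $\mathcal{Q}_\Psi$ back, using that spatial convolution with $f$ commutes with composition by $\Psi$ in $t$. You instead rederive that classical solution inside the $\Psi$-setting: $\mathcal{L}_\Psi$ in time via Theorem \ref{theoremderivative} (with the single boundary term supplied by \eqref{app3c2}), Fourier transform in space, the residue computation giving $\tilde G(x,s)=\frac{1}{2\sqrt{\kappa}}\,s^{-\mu/2}e^{-|x|s^{\mu/2}/\sqrt{\kappa}}$, then inversion via Corollary \ref{IGLT:cor} together with the identity $\mathcal{L}^{-1}\{s^{-\beta}e^{-\lambda s^{\alpha}}\}=t^{\beta-1}W(-\lambda t^{-\alpha},-\alpha,\beta)$. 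These steps are all sound, and your route is essentially the joint Laplace--Fourier derivation that \cite{debnath-bhatta} uses in the classical case, in-lined and lifted to the $\Psi$-setting. The trade-off: the paper's conjugation argument is shorter and outsources the hard analysis (precisely the term-by-term Wright inversion you flag as the main obstacle) to the citation, whereas yours is self-contained and genuinely exercises the paper's own transform machinery, which is arguably more in the spirit of its stated programme, at the price of having to justify that inversion. On that point, note that the vanishing of $1/\Gamma$ only disposes of the terms where $\beta-\alpha j$ is a nonpositive \emph{integer}; for negative non-integer exponents the termwise formula $\mathcal{L}^{-1}\{s^{p}\}=t^{-p-1}/\Gamma(-p)$ is purely formal, so a Hankel-contour (Bromwich) argument or a citation of the standard Wright-function inversion is still required. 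One further small slip: $0<\mu\leq 1$ gives $m=\lfloor\mu\rfloor+1=1$ only when $\mu<1$; at the endpoint $\mu=1$ this convention gives $m=2$, an edge case (shared by the paper's formulation) best treated separately as the classical diffusion equation.
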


\begin{proof}
	Let us define $v=\mathcal{Q}_\Psi^{-1}u$, where the operator $\mathcal{Q}_\Psi$ is applied with respect to the second variable $t$, so that $u(x,t)=v(x,\Psi(t))$. Then, once again using the identity \eqref{FwrtF:conjug}, the FDE \eqref{app3de} can be transformed into the following classical Riemann--Liouville FDE:
	\begin{equation} \label{app3Sde}
	\prescript{R}{0}{\mathcal{D}}^{\mu}_{t}v(x,t) = \kappa \frac{\partial^{2}v(x,t)}{\partial  x^{2}},
	\end{equation}
	while the initial and boundary conditions \eqref{app3c1}--\eqref{app3c2} can similarly be transformed into the following initial and boundary conditions for \eqref{app3Sde}:
	\begin{align}
	v(x,t) &\to 0 \quad\mathrm{ as }\quad |x| \to \infty, \label{app3Sc1} \\
	\prescript{}{0}{\mathcal{I}}^{(1-\mu)}_{t}v(x,t) \Big|_{t=0}&=f(x), \qquad x \in \mathbb{R}, \label{app3Sc2}
	\end{align}
	where for transforming the initial condition we used the fact that $\Psi(0)=0$.
	
	The FDE \eqref{app3Sde} with initial and boundary conditions \eqref{app3Sc1}--\eqref{app3Sc2} has the solution (see \cite[\S6.7]{debnath-bhatta}):
	\begin{equation}\label{app3Specialsol}
	v(x,t)= \int_{-\infty}^{\infty} \frac{1}{2\sqrt{\kappa}} t^{\frac{\mu}{2}-1} W \left( -\frac{|x-\eta|}{\sqrt{\kappa}t^{\frac{\mu}{2}}}, -\frac{\mu}{2}, \frac{\mu}{2} \right)f(\eta)d\eta,
	\end{equation}
	or in other words
	\[
	\mathcal{Q}_\Psi^{-1}u=\left(\mathcal{Q}_\Psi^{-1}G\right)*f=\mathcal{Q}_\Psi^{-1}\left(G*f\right),
	\]
	where the operators $\mathcal{Q}_\Psi$ are applied with respect to the second variable $t$ but the convolution is with respect to the first variable $x$. Applying $ \mathcal{Q}_\Psi $ to both sides, we get the required solution \eqref{app3sol}.
\end{proof}

It can be noted that for $ \Psi(t)=t $ and $ \mu=1 $, the Cauchy problem \eqref{app3de}-\eqref{app3c2} reduces to the classical diffusion problem and the solution \eqref{app3sol} reduces to the classical fundamental solution.

\subsection{Solutions of some general $ \Psi$-Hilfer FDEs}

In this section, we consider some multi-term fractional ordinary differential equations using $\Psi$-Hilfer derivatives. Equations of a similar type using plain Hilfer derivatives (in other words the special case $\Psi(t)=t$) were previously studied in \cite{Z. Tomovski}. Here we apply the operational calculus approach to efficiently find the general solutions to some initial value problems of $\Psi$-Hilfer type, which have been demonstrated in \cite{HilferExperimental} to have applications e.g. in the study of dielectric relaxation in glasses.

\begin{theorem}
	Assume that $0< \mu_1 \leq \mu_2< 1$ and $0\leq\nu_j\leq1$, $a_j\in \mathbb{R}$ for $j=1,2$. Consider the $\Psi$-Hilfer FDE
	\begin{equation}\label{app4de}
	a_{1}\prescript{}{0}{\mathcal{D}}^{\mu_1,\nu_{1}}_{\Psi(t)}y(t) +a_{2}\prescript{}{0}{\mathcal{D}}^{\mu_2,\nu_{2}}_{\Psi(t)}y(t) + a_{3} y(t)=f(t),
	\end{equation} 
	with initial conditions
	\begin{equation} \label{app4ic}
	\prescript{}{0}{\mathcal{I}}^{(1-\nu_j)(1-\mu_{j})}_{\Psi(t)} y(0)=b_{j}, \qquad j=1,2.
	\end{equation} 
	The initial value problem \eqref{app4de}--\eqref{app4ic} has the solution
	\begin{multline*}
	y(t) = \frac{1}{a_2} \sum_{k=0}^{\infty} \Big( -\frac{a_1}{a_2} \Big)^{k} \left[f(t)*_\Psi\left((\Psi(t))^{(\mu_{2}-\mu_{1})k+\mu_{2}-1} E_{\mu_{2},(\mu_{2}-\mu_{1})k+\mu_{2}}^{k+1} \left( - \frac{a_3}{a_2} (\Psi(t))^{\mu_{2}} \right)\right) \right. \\
	\left. +  a_2 b_2 (\Psi(t))^{(\mu_{2}-\mu_{1})k+\mu_{2}+\nu_2 (1-\mu_{2})-1} E_{\mu_{2},(\mu_{2}-\mu_{1})k+\mu_{2}+\nu_2 (1-\mu_{2})}^{k+1} \Big( - \frac{a_3}{a_2} (\Psi(t))^{\mu_{2}} \Big)   \right. \\
	\left. + a_1 b_1 (\Psi(t))^{(\mu_{2}-\mu_{1})k+\mu_{2}+\nu_1 (1-\mu_{1})-1} E_{\mu_{2},(\mu_{2}-\mu_{1})k+\mu_{2}+\nu_1 (1-\mu_{1})}^{k+1} \Big( - \frac{a_3}{a_2} (\Psi(t))^{\mu_{2}} \Big)  \right].
	\end{multline*}
\end{theorem}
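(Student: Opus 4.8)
The plan is to follow the same operational-calculus strategy used in Theorems \ref{app1RLFDE}--\ref{app2CFDE}: transport the whole problem to the classical Hilfer setting by the conjugation \eqref{FwrtF:Hilfer}, solve there with the ordinary Laplace transform, and pull the answer back by applying $\mathcal{Q}_\Psi$. First I would set $z=\mathcal{Q}_\Psi^{-1}y$ and $g=\mathcal{Q}_\Psi^{-1}f$, so that $y=z\circ\Psi$ and $f=g\circ\Psi$. Applying \eqref{FwrtF:Hilfer} term by term turns \eqref{app4de} into the constant-coefficient classical multi-term Hilfer equation
\[
a_1\,\prescript{}{0}{\mathcal{D}}^{\mu_1,\nu_1}_{t}z(t)+a_2\,\prescript{}{0}{\mathcal{D}}^{\mu_2,\nu_2}_{t}z(t)+a_3 z(t)=g(t),
\]
while \eqref{FwrtF:conjug} together with $\Psi(0)=0$ converts the initial data \eqref{app4ic} into $\left(\prescript{}{0}{\mathcal{I}}^{(1-\nu_j)(1-\mu_j)}_{t}z\right)(0)=b_j$ for $j=1,2$, since the $\Psi$-integral evaluated at $0$ equals the ordinary integral evaluated at $\Psi(0)=0$.

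Next I would apply the classical Laplace transform. Because $0<\mu_1\le\mu_2<1$ we have $m=1$ for both derivatives, so \eqref{LTHD} contains only the $i=0$ term, and its exponent simplifies via $(1-\nu_j)+\mu_j\nu_j-1=-\nu_j(1-\mu_j)$. Writing $Z=\mathcal{L}\{z\}$ and $G=\mathcal{L}\{g\}$ and solving the resulting algebraic equation gives
\[
Z(s)=\frac{G(s)+a_1 b_1 s^{-\nu_1(1-\mu_1)}+a_2 b_2 s^{-\nu_2(1-\mu_2)}}{a_1 s^{\mu_1}+a_2 s^{\mu_2}+a_3}.
\]
I would then factor the denominator as $a_2\bigl(s^{\mu_2}+\tfrac{a_3}{a_2}\bigr)\bigl(1+\tfrac{a_1}{a_2}\,s^{\mu_1}(s^{\mu_2}+\tfrac{a_3}{a_2})^{-1}\bigr)$ and expand the last factor as a geometric series, producing
\[
\frac{1}{a_1 s^{\mu_1}+a_2 s^{\mu_2}+a_3}=\frac{1}{a_2}\sum_{k=0}^{\infty}\Bigl(-\frac{a_1}{a_2}\Bigr)^{k}\frac{s^{\mu_1 k}}{\bigl(s^{\mu_2}+\frac{a_3}{a_2}\bigr)^{k+1}},
\]
valid where $\bigl|\tfrac{a_1}{a_2}s^{\mu_1-\mu_2}\bigr|<1$, i.e. for $s$ large, which suffices for Laplace inversion. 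The heart of the computation is the term-by-term inversion: multiplying this series by each of the three numerator contributions and matching against the Prabhakar Laplace transform in Lemma \ref{Lem:LapML} with $\mu\mapsto\mu_2$, $\gamma\mapsto k+1$, $\lambda\mapsto-\tfrac{a_3}{a_2}$, I read off $\nu$ from $\mu_2(k+1)-\nu=(\text{power of }s\text{ in the numerator})$. For the forcing term this power is $\mu_1 k$, giving $\nu=(\mu_2-\mu_1)k+\mu_2$, while the extra factors $s^{-\nu_j(1-\mu_j)}$ attached to $a_2 b_2$ and $a_1 b_1$ shift $\nu$ by $+\nu_j(1-\mu_j)$. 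Inverting each term (using the classical convolution theorem for the $G(s)$ part) yields $z$ as the sum of the three Prabhakar-function series, with the forcing contribution appearing as an ordinary convolution. Finally I would apply $\mathcal{Q}_\Psi$: each $t$ becomes $\Psi(t)$, and the classical convolution becomes $*_\Psi$ by the relation \eqref{convol:reln} of Theorem \ref{GLCandLCtheorem}, delivering the stated formula for $y$.

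The main obstacle I expect is analytic rather than algebraic: justifying the interchange of the infinite geometric sum with the inverse Laplace transform and with the convolution integral. This requires showing that the series converges in a right half-plane (where $|a_1 s^{\mu_1-\mu_2}/a_2|<1$, which holds for large $s$ since $\mu_1\le\mu_2$) uniformly enough to invert termwise, and that the resulting $t$-series converges for all $t\ge0$; the latter follows because each Prabhakar function is entire of order $1/\mu_2$, so the tail of the $k$-series is dominated by a convergent majorant. Confirming that $m=1$ so that only a single initial term survives in \eqref{LTHD}, and correctly tracking the three families of shifted $\nu$-parameters, are the remaining bookkeeping points.
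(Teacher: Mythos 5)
Your proposal is correct and takes essentially the same route as the paper: conjugate by $\mathcal{Q}_\Psi$ via \eqref{FwrtF:Hilfer} to reduce the problem to a classical multi-term Hilfer IVP, solve that, and pull the solution back with $\mathcal{Q}_\Psi$ using the $\Psi$-convolution relation of Theorem \ref{GLCandLCtheorem}. The only difference is that where the paper simply cites \cite[Theorem 5]{Z. Tomovski} for the classical solution, you rederive it explicitly (Laplace transform, geometric-series expansion of the symbol, termwise Prabhakar inversion via Lemma \ref{Lem:LapML}) --- which is precisely the argument of that cited reference, so it fills in detail rather than changing the method.
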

\allowdisplaybreaks

\begin{proof}
	Again write $z=\mathcal{Q}_\Psi^{-1}y$ and $g=\mathcal{Q}_\Psi^{-1}f$ so that $y=z\circ\Psi$ and $f=g\circ\Psi$. Then using the identity \eqref{FwrtF:Hilfer}, the IVP \eqref{app4de}--\eqref{app4ic} can be transformed into the following IVP:
	\begin{align*}
	a_{1}\prescript{}{0}{\mathcal{D}}^{\mu_1,\nu_{1}}_{t}z(t) + a_{2}\prescript{}{0}{\mathcal{D}}^{\mu_2,\nu_{2}}_{t}z(t) + a_{3} z(t)=g(t), \\
	\prescript{}{0}{\mathcal{I}}^{(1-\nu_j)(1-\mu_{j})}_ty(0)=b_{j}, \qquad j=1,2.
	\end{align*}
	which was solved in \cite[Theorem 5]{Z. Tomovski}. The solution to this Hilfer FDE is
	\begin{multline*}
	z(t) = \frac{1}{a_2} \sum_{k=0}^{\infty} \Big( -\frac{a_1}{a_2} \Big)^{k} \left[g(t)*\left(t^{(\mu_{2}-\mu_{1})k+\mu_{2}-1} E_{\mu_{2},(\mu_{2}-\mu_{1})k+\mu_{2}}^{k+1} \Big( - \frac{a_3}{a_2} t^{\mu_{2}} \Big)\right) \right. \\
	\left. +  a_2 b_2 t^{(\mu_{2}-\mu_{1})k+\mu_{2}+\nu_2 (1-\mu_{2})-1} E_{\mu_{2},(\mu_{2}-\mu_{1})k+\mu_{2}+\nu_2 (1-\mu_{2})}^{k+1} \Big( - \frac{a_3}{a_2} t^{\mu_{2}} \Big)   \right. \\
	\left.+a_1 b_1 t^{(\mu_{2}-\mu_{1})k+\mu_{2}+\nu_1 (1-\mu_{1})-1} E_{\mu_{2},(\mu_{2}-\mu_{1})k+\mu_{2}+\nu_1 (1-\mu_{1})}^{k+1} \Big( - \frac{a_3}{a_2} t^{\mu_{2}} \Big)  \right].
	\end{multline*}
	Applying $ \mathcal{Q}_\Psi $ to both sides of the above equation, and using the result of Theorem \ref{GLCandLCtheorem} on $\Psi$-convolution, we get the required result.
\end{proof}

\begin{theorem}
	Assume that $0< \mu_1 \leq \mu_2 \leq \mu_3< 1$ and $0 \leq \nu_j\leq 1$, $a_j\in \mathbb{R}$ for $j=1,2,3$. Then the $\Psi$-Hilfer initial value problem
	\begin{align*}
	a_{1}\prescript{}{0}{\mathcal{D}}^{\mu_1,\nu_{1}}_{\Psi(t)}y(t) + a_{2}\prescript{}{0}{\mathcal{D}}^{\mu_2,\nu_{2}}_{\Psi(t)}y(t) &+a_{3}\prescript{}{0}{\mathcal{D}}^{\mu_3,\nu_{3}}_{\Psi(t)}y(t) + a_{4} y(t)=f(t), \\
	\prescript{}{0}{\mathcal{I}}^{(1-\nu_j)(1-\mu_{j})}_{\Psi(t)}y(0)&=b_{j}, \qquad j=1,2,3,
	\end{align*}
	has the solution
	\begin{align*}
	y(t) &= \sum_{k=0}^{\infty} \frac{(-1)^k}{a_3^{k+1}} \sum_{i=0}^{k} \binom{k}{i} {a_1}^i {a_2}^{k-i} \Bigg[ (\Psi(t))^{(\mu_{3}-\mu_{2})k+(\mu_{2}-\mu_{1})i+\mu_{3}-1} \\
	&\hspace{1.5cm}\times\Bigg(a_1 b_1 (\Psi(t))^{\nu_1 (1-\mu_{1})} E_{\mu_{3},(\mu_{3}-\mu_{2})k+(\mu_{2}-\mu_{1})i+\mu_{3}+\nu_1 (1-\mu_{1})} \left( -\frac{a_4}{a_3} (\Psi(t))^{\mu_{3}} \right) \\
	&\hspace{2cm}+ a_2 b_2 (\Psi(t))^{\nu_2 (1-\mu_{2})} E_{\mu_{3},(\mu_{3}-\mu_{2})k+(\mu_{2}-\mu_{1})i+\mu_{3}+\nu_2 (1-\mu_{2})} \left( - \frac{a_4}{a_3} (\Psi(t))^{\mu_{3}} \right) \\
	&\hspace{2cm}+  a_3 b_3 (\Psi(t))^{\nu_3 (1-\mu_{3})} E_{\mu_{3},(\mu_{3}-\mu_{2})k+(\mu_{2}-\mu_{1})i+\mu_{3}+\nu_3 (1-\mu_{3})} \left( - \frac{a_4}{a_3} (\Psi(t))^{\mu_{3}} \right) \Bigg) \\
	&\hspace{0.5cm}+\left((\Psi(t))^{(\mu_{3}-\mu_{2})k+(\mu_{2}-\mu_{1})i+\mu_{3}-1}E_{\mu_{3},(\mu_{3}-\mu_{2})k+(\mu_{2}-\mu_{1})i+\mu_{3}}^{k+1} \left( - \frac{a_4}{a_3} (\Psi(t))^{\mu_{3}} \right)\right) *_{\Psi} f(t) \Bigg]. \nonumber
	\end{align*}
\end{theorem}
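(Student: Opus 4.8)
The plan is to follow the same operational-calculus strategy used for the preceding $\Psi$-Hilfer theorems: first strip away the function $\Psi$ by conjugation, then solve the resulting classical three-term Hilfer problem, and finally reattach $\Psi$ using Theorem \ref{GLCandLCtheorem}. First I would set $z=\mathcal{Q}_\Psi^{-1}y$ and $g=\mathcal{Q}_\Psi^{-1}f$, so that $y=z\circ\Psi$ and $f=g\circ\Psi$. Applying the conjugation identity \eqref{FwrtF:Hilfer} to each of the three $\Psi$-Hilfer derivatives converts the given problem into the classical three-term Hilfer initial value problem
\[
a_1\,\prescript{}{0}{\mathcal{D}}^{\mu_1,\nu_1}_t z + a_2\,\prescript{}{0}{\mathcal{D}}^{\mu_2,\nu_2}_t z + a_3\,\prescript{}{0}{\mathcal{D}}^{\mu_3,\nu_3}_t z + a_4 z = g,
\]
subject to $\prescript{}{0}{\mathcal{I}}^{(1-\nu_j)(1-\mu_j)}_t z(0)=b_j$ for $j=1,2,3$, where the hypothesis $\Psi(0)=0$ (via the integral conjugation in \eqref{FwrtF:conjug} evaluated at the lower endpoint) guarantees that the transformed initial conditions retain exactly the same constants $b_j$.

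Next I would solve this classical problem by the ordinary Laplace transform. Since $0<\mu_j<1$ forces $m=1$ in every term, formula \eqref{LTHD} reduces to $\mathcal{L}\{\prescript{}{0}{\mathcal{D}}^{\mu_j,\nu_j}_t z\}=s^{\mu_j}Z(s)-b_j\,s^{-\nu_j(1-\mu_j)}$, where $Z=\mathcal{L}\{z\}$ and $G=\mathcal{L}\{g\}$. Substituting and solving the resulting linear equation yields
\[
Z(s)=\frac{G(s)+\sum_{j=1}^{3}a_j b_j\,s^{-\nu_j(1-\mu_j)}}{a_1 s^{\mu_1}+a_2 s^{\mu_2}+a_3 s^{\mu_3}+a_4}.
\]

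The key algebraic step is to expand the reciprocal of the denominator as a double series. I would factor out $a_3 s^{\mu_3}+a_4$, expand $\bigl(1+(a_1 s^{\mu_1}+a_2 s^{\mu_2})/(a_3 s^{\mu_3}+a_4)\bigr)^{-1}$ as a geometric series indexed by $k$, and then expand each $(a_1 s^{\mu_1}+a_2 s^{\mu_2})^{k}$ by the binomial theorem, indexed by $i$. Each summand then carries a factor $1/(a_3 s^{\mu_3}+a_4)^{k+1}=a_3^{-(k+1)}(s^{\mu_3}-\lambda)^{-(k+1)}$ with $\lambda=-a_4/a_3$, multiplied by a power $s^{\mu_1 i+\mu_2(k-i)}$ and, in the boundary contributions, an extra factor $s^{-\nu_j(1-\mu_j)}$. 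I would invert term by term using the three-parameter Laplace transform of Lemma \ref{Lem:LapML}, matching $\gamma=k+1$ and $\mu=\mu_3$ and reading off the second parameter $\nu$ from $\mu_3(k+1)-\nu=$ (the net power of $s$). A direct check shows this produces precisely the exponents $(\mu_3-\mu_2)k+(\mu_2-\mu_1)i+\mu_3$, together with their shifts by $\nu_j(1-\mu_j)$, that appear in the statement; the forcing term $G(s)$ inverts to a classical convolution of $g$ with the corresponding Prabhakar kernel.

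Finally I would apply $\mathcal{Q}_\Psi$ to the formula for $z$: the composition sends each power $t^{\alpha}$ to $(\Psi(t))^{\alpha}$ and each $E^{\gamma}_{\mu_3,\nu}(\lambda t^{\mu_3})$ to $E^{\gamma}_{\mu_3,\nu}(\lambda(\Psi(t))^{\mu_3})$, while Theorem \ref{GLCandLCtheorem} turns the classical convolution into the $\Psi$-convolution $*_\Psi f$, giving $y$ in the stated form. The main obstacle will not be any single manipulation but rather the combinatorial bookkeeping of the double series together with the justification for interchanging the infinite summation with the inverse Laplace transform. This can be handled by the exponential-order hypotheses and the observation that, because $\mu_1,\mu_2<\mu_3$, for $s$ large enough both $\left|(a_1 s^{\mu_1}+a_2 s^{\mu_2})/(a_3 s^{\mu_3}+a_4)\right|$ and $\left|a_4/(a_3 s^{\mu_3})\right|$ are less than one, making the geometric expansion valid and the term-by-term inversion legitimate.
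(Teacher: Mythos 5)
Your proposal follows the same overall route as the paper: conjugate by $\mathcal{Q}_\Psi$ via \eqref{FwrtF:Hilfer} to turn the $\Psi$-Hilfer problem into the classical Hilfer problem for $z=\mathcal{Q}_\Psi^{-1}y$, solve that, and map back with $\mathcal{Q}_\Psi$ and Theorem \ref{GLCandLCtheorem}. The difference lies in how the classical problem is handled: the paper's proof is two sentences long, citing \cite[Theorem 5]{Z. Tomovski} for the classical three-term solution and omitting all details, whereas you re-derive that solution by the Laplace transform — solving for $Z(s)$, factoring out $a_3s^{\mu_3}+a_4$, expanding geometrically in $k$ and binomially in $i$, and inverting term by term with Lemma \ref{Lem:LapML}. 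Your bookkeeping is correct: $\mu_3(k+1)-\mu_1 i-\mu_2(k-i)=(\mu_3-\mu_2)k+(\mu_2-\mu_1)i+\mu_3$, so your route buys a self-contained proof at the cost of length. Two remarks. First, since the initial-condition terms also sit over the factor $(a_3s^{\mu_3}+a_4)^{k+1}$, your inversion necessarily produces Prabhakar functions $E^{k+1}_{\mu_3,\,\cdot}$ in those terms as well; the theorem as printed writes two-parameter functions $E_{\mu_3,\,\cdot}$ there, which is inconsistent both with your (correct) computation and with the pattern of the preceding two-term theorem, and appears to be a typographical omission of the superscript $k+1$ — so your derivation in fact corrects the stated formula rather than contradicting it. Second, your justification of the geometric expansion invokes $\mu_1,\mu_2<\mu_3$, but the hypothesis only gives $\mu_2\leq\mu_3$; in the boundary case $\mu_2=\mu_3$ the ratio $(a_1s^{\mu_1}+a_2s^{\mu_2})/(a_3s^{\mu_3}+a_4)$ tends to $a_2/a_3$ as $s\to\infty$, so the expansion (and the convergence of the resulting double series) needs either $|a_2|<|a_3|$ or a different grouping of terms — a caveat equally present in, and inherited from, the cited classical result.
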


\begin{proof}
	It is easy to derive the solution by combining the result of \cite[Theorem 5]{Z. Tomovski} with the operational calculus technique demonstrated in the previous proof. So we omit the straightforward details. 
\end{proof}

\section{Conclusions}

In this paper, we have studied generalisations of some important operations used in analysis and differential equations, namely the Laplace transform and also convolution. These generalisations are given by composition with an increasing function $\Psi$, in a way closely connected with the notion of differentiating and integrating with respect to $\Psi$.

We have proved several important properties of these operations by using an operational calculus approach. Armed with these tools and properties, we have given a sufficient condition to guarantee the effectiveness of the generalised Laplace transform in solving constant-coefficient fractional differential equations, and we have found analytic solutions of several generalised Cauchy problems in the settings of $ \Psi $-RL, $ \Psi $-Caputo and $ \Psi $-Hilfer fractional derivatives whose special cases have remarkable physical applications.

There are many potential ideas for future work in this direction. By extending the limits of integration to the entire real axis, the two-sided generalised Laplace transform can be defined. Moving forward in the same line of thought, several other generalised transforms can be developed, such as a generalised Fourier transform and generalised Mellin transform with respect to a function $\Psi(t)$. In future, these can also be applied to $\Psi$-fractional differential equations in the same way as the classical Fourier and Mellin transforms can be applied to classical FDEs. %The Mellin transform itself is closely related to the Laplace transform, as discussed in \cite{Fahad},

\end{document}